\documentclass[a4paper, oneside, 10pt]{article} \usepackage[english]{babel}
\usepackage{latexsym,amsfonts,amsmath,enumerate,graphics,enumerate,amsthm}
\usepackage[pagewise]{lineno}
	%\linenumbers  %FODDCDS
\title{\textbf{Pure point diffraction and entropy beyond the Euclidean space} } \author{T.~Hauser}
% -------------DEFINITIONS----------------------------------------------

\let\epsilon=\varepsilon

\newenvironment{dedication}
        {%\vspace{1ex}
        \begin{quotation}\begin{center}\begin{em}}
        {\par\end{em}\end{center}\end{quotation}}

\theoremstyle{definition}
\newtheorem{definition}{Definition}[section]

\theoremstyle{default}

\newtheorem{theorem}[definition]{Theorem}
\newtheorem*{theorem*}{}
\newtheorem{proposition}[definition]{Proposition}
\newtheorem{lemma}[definition]{Lemma}

\newtheorem{exa}[definition]{Example}

\newtheorem*{exa*}{Example}
\newtheorem{remark}[definition]{Remark}
\newtheorem*{acknowledgement}{Acknowledgement}

\let\epsilon=\varepsilon
\let\phi=\varphi
\let\theta=\vartheta

%To make it doublespace
%\renewcommand{\baselinestretch}{1.62} 
%-----------------BEGINDOCUMENT--------------------------------------

\setlength{\textheight}{240mm} 
\setlength{\textwidth}{140mm}
\setlength{\topmargin}{-5mm}
\setlength{\oddsidemargin}{10mm}  
\setlength{\evensidemargin}{10mm}  

\setlength{\marginparsep}{8mm}
\setlength{\marginparwidth}{25mm}

\begin{document}
\maketitle

\begin{dedication}
{Dedicated to the memory of Uwe Grimm} %(1963 – 2021)}
\end{dedication}

\begin{abstract}
	For Euclidean pure point diffractive Delone sets of finite local complexity and with uniform patch frequencies it is well known that the patch counting entropy computed along the closed centred balls is zero.
	We consider such sets in the setting of $\sigma$-compact locally compact Abelian groups and show that the topological entropy of the associated Delone dynamical system is zero.
	For this we provide a suitable version of the variational principle. 
	 We furthermore construct counterexamples, which show that the patch counting entropy of such sets can be non-zero in this context. Other counterexamples will show that the patch counting entropy of such a set can not be computed along a limit and even be infinite in this setting. 	
 \end{abstract}

\textbf{Mathematics Subject Classification (2000):} 
	37B40,   %  	Dynamical systems and ergodic theory -	Topological entropy
	52C23, %   	Convex and discrete geometry - Quasicrystals and aperiodic tilings in discrete geometry
 	37B10,     %  	Dynamical systems and ergodic theory -	Symbolic dynamics
 	37A15.    %	General groups of measure-preserving transformations and dynamical systems

\textbf{Keywords:} Pure point diffraction, entropy, $p$-adic number.

%%%%%%%%%%%%%
\section{Introduction}
%%%%%%%%%%%%%

	Aperiodic order is an intriguing field with various motivations. It can be seen as the abstraction of the study of real world solids which are not crystals but have a remarkable long range order. 
	To be more precise in this case the long range order can be observed by studying diffraction experiments and in the observation of a diffraction picture that consists of bright spots \cite{baake2013aperiodic}. The discovery of real world materials that have this form of longe range order, while not being crystals \cite{shechtman1984metallic, baake2013aperiodic} was a revolutionary step in the field of crystallography and even though the field of aperiodic order existed before this discovery showed its importance and relevance for applications in physics. 
	
	Mathematically one considers so called \emph{Delone sets}, i.e.\ subsets $\omega\subseteq \mathbb{R}^d$ for which there exists a compact set $K$ such that $K+\omega=\mathbb{R}^d$ and an open neighbourhood $V$ of $0$ such that $\{V+g;\, g\in \omega\}$ is a disjoint family. The mathematical notion of a crystal is a Delone subgroup and also called a \emph{uniform lattice}. 
	See Subsection \ref{PPDref} for a definition of pure point diffraction. 
	
	Naturally one wants to understand in which sense pure point diffractive materials are ordered. 	
	A possibility to quantify the order of a Delone subset $\omega\subseteq \mathbb{R}^d$ one often uses the \emph{patch counting entropy} \cite{lagarias1999geometric, lagarias2003repetitive, baake2007pure,baake2009kinematic,baake2011kinematic,baake2012comment, HuckandRichard, grimm2015aperiodic}. 
	For $A\subseteq \mathbb{R}^d$ we define 
	$\operatorname{Pat}_\omega(A):=\{(\omega-g)\cap A;\, g\in \omega\}$ and define the patch counting entropy by 
	\[\operatorname{h}_{pc}(\omega):=\limsup_{n\to \infty}\frac{\log|\operatorname{Pat}_\omega(B_n)|}{\theta(B_n)}.\]
	Here $B_n$ denotes the closed centred ball around $0$, $\theta$ is the Lebesgue measure and $|\cdot|$ the cardinality. 	
	In \cite{baake2007pure} it is shown that pure point diffractive subsets of $\mathbb{R}^d$ have $0$ patch counting entropy, which gives strong evidence to our intuition that pure point diffractive materials are somehow ordered. Note that in this article (and in \cite{baake2007pure}) a pure point diffractive Delone set is implicitly assumed to be a Delone set of finite local complexity with uniform patch frequencies. See Section \ref{sec:prelims} below for further details on these notions.

	Another application (and possibly the foundation) of several methods in the field of aperiodic order can be seen in harmonic analysis and in particular the study of $p$-adic numbers. 
	In Y. Meyers famous monograph 'Algebraic numbers and harmonic analysis' \cite{meyer1972algebraic} the method of a 'cut and project scheme' was developed in order to construct a suitable generalization of uniform lattices in locally compact Abelian groups, the so called regular model sets. 
	This method is of particular interest as the additive group of $p$-adic numbers does not contain uniform lattices, but does contain regular model sets. 
	
	On this footing Aperiodic order is nowadays usually studied for subsets of (non-compact) $\sigma$-compact locally compact Abelian groups (LCA$_\sigma$ groups) as for example in \cite{baake2004dynamical, baake2004weighted, strungaru2011positive, HuckandRichard, richard2017pure, lenz2020pure, hauser2020Anote}. %Here and below we do not intent to give full lists of references. 
	In particular, the concepts of pure point diffractiveness and patch counting entropy are defined and studied for subsets of such groups. However, it remained open, whether pure point diffractiveness is sensitive to patch counting entropy and in particular, 
	whether pure point diffraction implies $0$ patch counting entropy in this context. 
	
	Considering $\mathbb{Z}$ equipped with the discrete metric ($d(g,g')=1$ for distinct $g,g'\in \mathbb{Z}$) one observes that in general one cannot use the closed centred balls for averaging in this context. It is standard to use the concept of van Hove sequences instead, which we will introduce below. 
	However, care has to be taken, since the concept of patch counting entropy depends on the choice of a van Hove sequence \cite{hauser2020Anote}. We thus focus in the introduction on LCA$_\sigma$ groups that are equipped with a metric such that the closed centred balls form a van Hove sequence, like for example the additive groups of $p$-adic numbers $\mathbb{Q}_p$ or the Euclidean space $\mathbb{R}^d$. With this assumption we define patch counting entropy as above considering the Haar measure instead of the Lebesgue measure. 
	
	As pointed out by M.\ Baake and U.\ Grimm in \cite{baake2009kinematic,baake2011kinematic,baake2012comment} an absolutely continuous diffraction measure can be insensitive to patch counting entropy, if one considers Delone sets of finite local complexity in the Euclidean space. The result of \cite{baake2007pure} can be interpreted as follows. Pure point diffractiveness is sensitive to the patch counting entropy in the context of Delone sets of finite local complexity and with uniform patch frequencies in the Euclidean space. We will show that the property of being  pure point diffractive is insensitive to the patch counting entropy in the context of Delone sets of finite local complexity and with uniform patch frequencies in the LCA$_\sigma$ group $\mathbb{Q}_2$. 
	%Our main result is the following family of counterexamples that shows that the raised question has to be answered in the negative.
	The following is a slight simplification of Theorem \ref{the:counterexamples} below.\\
%	\begin{theorem*}%[Special case of Theorem 4.4]
%		For each $h\in [0,\infty]$ there exists a pure point diffractive subset (of finite local complexity and with uniform patch frequencies) of the additive group of dyadic numbers $(\mathbb{Q}_2,+)$ that has patch counting entropy $h$. 
%\end{theorem*}	

\emph{For each $h\in [0,\infty]$ there exists a pure point diffractive subset (of finite local complexity and with uniform patch frequencies) of the additive group of dyadic numbers $(\mathbb{Q}_2,+)$ that has patch counting entropy $h$.\\}
	
	In \cite[Theorem 2.3]{lagarias1999geometric} it is shown that any finite local complexity Delone subset of $\mathbb{R}^d$ has a finite patch counting entropy. Our result also shows that a similar result does not hold for LCA$_\sigma$ groups, even if the group is metrizable and the subset is considered to be pure point diffractive.

%	
%	
%	
%	Nevertheless, the raised question can be answered in the positive for the class of compactly generated LCA$_\sigma$ groups, i.e.\ LCA$_\sigma$ groups for which there exists a compact subset $C$ such that any element can be written as a finite sum of elements of $C$. The following is a generalization of \cite{baake2007pure}. We will combine techniques of \cite{baake2007pure} and \cite{hauser2020Anote} in order to achieve this result. Recall that we implicitly assume that $G$ allows for a metric such that the closed centred balls form a van Hove sequence. We will see in Theorem \ref{} that the patch counting entropy does not depend on the choice of such a metric.  
%	
%\begin{theorem}
%	If $G$ is a compactly generated LCA$_\sigma$ group, then any pure point diffractive subset has patch counting entropy $0$. 
%\end{theorem}
	
	%Beyond the compactly generated case something more can be said. 
	
	Analyzing the techniques developed in \cite{baake2007pure} another notion of entropy comes into play. From a pure point diffractive subset $\omega$ a dynamical system is constructed by considering the closure $X_\omega$ of the translation orbit $\{\omega+g;\, g\in G\}$ with respect to an appropriate topology on the set of all closed subsets of the group. Acting on $X_\omega$ with $G$ by translation, i.e.\ considering the action of $(g,\xi)\mapsto \xi+g$ one can use the classical notion of topological entropy of this action \cite{adler1965topological, hauser2020relative, hauser2022entropy}. We will refer to this notion as the \emph{topological entropy of $\omega$}. In contrast to Theorem \ref{the:counterexamples} we have the following result (see Theorem \ref{the:topentropy0} below). Note that we do not need to impose any constraints on the metrizability of the group.\\

%\begin{theorem*}%[Theorem 3.6]
%	If $G$ is a LCA$_\sigma$ group then any pure point diffractive subset of finite local complexity  and with uniform patch frequencies has topological entropy $0$.
%\end{theorem*}

\emph{
If $G$ is a $\sigma$-compact locally compact Abelian group then any pure point diffractive subset of finite local complexity  and with uniform patch frequencies has topological entropy $0$.\\
}

	In order to achieve this result we essentially follow the arguments of \cite{baake2007pure}. Our main contribution here is the generalization of the used variational principle to the context of actions of LCA$_\sigma$ groups that do not contain uniform lattices, such as $\mathbb{Q}_p$. Note that it remains open, whether the variational principle holds without the assumption of $\sigma$-compactness. 

	Theorem \ref{the:topentropy0} demonstrates, that the phenomenon observed in Theorem \ref{the:counterexamples} is a result of the difference of the concepts of patch counting and topological entropy for pure point diffractive subsets of $\mathbb{Q}_2$. 
	It is thus natural to ask, how the notion of patch counting entropy should be modified such that this new notion equals the topological entropy for all LCA$_\sigma$ groups. Alternatively, one can ask, how to reformulate the definition of topological entropy of the associated action on $X_\omega$ in order to give an intrinsic geometric definition that does not need the construction of $X_\omega$. 
	This question has already been answered in \cite{hauser2020Anote} for certain LCA$_\sigma$ groups and using the results of \cite{hauser2022entropy} it is straightforward to generalize the results to all LCA$_\sigma$ groups. We finish this introduction with a recap on this approach for the convenience of the reader. 
	
	Let $G$ be a LCA$_\sigma$ group and consider a Delone subset $\omega\subseteq G$. 
	Let $A\subseteq G$ be a compact subset of $G$ and $V$ be an open neighbourhood of $0$. We say that $F\subseteq \omega$ is an \emph{$A$-patch representation at scale $V$ for $\omega$}, whenever for any $g\in \omega$ there is $f\in F$ such that 
	$(\omega-f)\cap A\subseteq (\omega-g)+V$ and $(\omega-g)\cap A\subseteq (\omega-f)+V$. As presented in \cite[Remark 3.8]{hauser2020Anote} there always exists a finite $A$-patch representation at scale $V$ for $\omega$. We define $\operatorname{pat}_\omega(A,V)$ as the minimal cardinality of an $A$-patch representation at scale $V$ for $\omega$. 
	%A Delone set is said to be of finite local complexity, whenever $Pat_\omega(A)$ is finite for all compact subsets $A\subseteq G$ and recall that all pure point diffractive sets are assumed to be Delone subsets of finite local complexity. 
	Using \cite[Theorem 4.8]{hauser2022entropy} in \cite{hauser2020Anote} instead of \cite[Lemma 2.5]{hauser2020Anote} we know that \cite[Theorem 3.10]{hauser2020Anote} holds for all LCA$_{\sigma}$ groups and observe the following. 
	\begin{proposition}
		If $G$ is a LCA$_\sigma$ group, then the topological entropy of a Delone subset $\omega$ is given by 
		\[\sup_{V}\limsup_{n\to \infty} \frac{\log \operatorname{pat}_\omega(A_n,V)}{\theta(A_n)},\]
		where $(A_n)_{n\in \mathbb{N}}$ is a van Hove sequence in $G$ and where the supremum is taken over all open neighbourhoods of $0$. 		
	\end{proposition}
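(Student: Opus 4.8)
The plan is to follow the argument of \cite{hauser2020Anote} essentially verbatim, upgrading the single ingredient that was previously available only for a restricted class of groups. Recall that \cite[Theorem~3.10]{hauser2020Anote} already establishes precisely the displayed identity, but only for those LCA$_\sigma$ groups to which \cite[Lemma~2.5]{hauser2020Anote} applies; that lemma is the tool by which the topological entropy of the $G$-action is computed via open covers along a van Hove sequence, and in \cite{hauser2020Anote} it was at hand only under an extra assumption on $G$ (for instance the presence of a uniform lattice, which fails for $\mathbb{Q}_p$). The first step is therefore to go through the proof of \cite[Theorem~3.10]{hauser2020Anote} and to confirm that \cite[Lemma~2.5]{hauser2020Anote} is the only place where a hypothesis on $G$ beyond local compactness and $\sigma$-compactness is used.

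The second step is to substitute \cite[Theorem~4.8]{hauser2022entropy} for \cite[Lemma~2.5]{hauser2020Anote}. The Delone dynamical system $X_\omega$ --- the orbit closure of $\omega$ under translation, equipped with the local (vague) topology on the closed subsets of $G$ --- is compact, and the content needed from the old lemma is a description of its topological entropy as a van Hove average of covering numbers of a natural family of finite open covers, together with the assertion that this average does not depend on the chosen van Hove sequence. For an arbitrary LCA$_\sigma$ group this is exactly what \cite[Theorem~4.8]{hauser2022entropy} supplies: for every finite open cover $\mathcal{U}$ of $X_\omega$ and every van Hove sequence $(A_n)_{n\in\mathbb{N}}$ the quantity $\limsup_n \theta(A_n)^{-1}\log N(\mathcal{U}^{A_n})$ --- where $\mathcal{U}^{A_n}$ is the appropriate refinement of $\mathcal{U}$ along the orbit segment $A_n$ and $N(\cdot)$ denotes the minimal cardinality of a subcover --- is independent of $(A_n)$, and its supremum over all $\mathcal{U}$ is the topological entropy of $\omega$. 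With this replacement the proof of \cite[Theorem~3.10]{hauser2020Anote} goes through unchanged, so the displayed formula holds for every LCA$_\sigma$ group; in particular the right-hand side is independent of the van Hove sequence, although, as stressed in the introduction, the inner $\limsup$ by itself is not.

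The geometric core, which I expect to require the most care, is the identification of $\operatorname{pat}_\omega(A,V)$ with a covering number of the above type. Given $\mathcal{U}$, one chooses $A=A_m$ and a symmetric neighbourhood $V$ of $0$ with $V+V\subseteq V_0$ fine enough that the patch cylinders $\{\eta\in X_\omega:\ (\eta-f)\cap A\subseteq\xi+V\ \text{and}\ \xi\cap A\subseteq(\eta-f)+V\}$ refine $\mathcal{U}$; one must then see that an $A_n$-patch representation at scale $V$ already realizes the refinement of $\mathcal{U}$ along all of $A_n$, because the single patch $(\omega-g)\cap A_n$ simultaneously records every local configuration $(\omega-g-t)\cap(A_n-t)$ for $t\in A_n\cap(\omega-g)$, so that a fixed scale $V$ on the large window $A_n$ already performs the role of iterating the cover $\mathcal{U}$ over the whole of $A_n$. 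Conversely, $\operatorname{pat}_\omega(A_n,V)$ is bounded above by such a covering number, using \cite[Remark~3.8]{hauser2020Anote} to know that a finite $A_n$-patch representation at scale $V$ always exists, so that $\operatorname{pat}_\omega(A_n,V)<\infty$ for each fixed $V$. The van Hove boundary contributions arising when passing between the two descriptions are controlled in the standard way by the defining property of a van Hove sequence, and since \cite[Theorem~4.8]{hauser2022entropy} makes the final value independent of $(A_n)$, no delicate choice of averaging sequence is needed. One should finally record explicitly that the supremum over $V$ is precisely what converts the van Hove dependent and possibly infinite patch counting entropy of the introduction into the intrinsic, van Hove independent topological entropy of $\omega$.
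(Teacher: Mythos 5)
Your proposal follows exactly the route the paper takes: it observes that \cite[Theorem 3.10]{hauser2020Anote} gives the displayed identity once \cite[Lemma 2.5]{hauser2020Anote} is replaced by \cite[Theorem 4.8]{hauser2022entropy}, which is precisely the paper's one-line justification. Your additional sketch of the patch-representation versus covering-number comparison is material already contained in the cited proof of \cite[Theorem 3.10]{hauser2020Anote}, so nothing further is needed.
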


%%%%%%%%%%%%%%%%%%%%%%%%%%%%%%%%%%%%%%%%%%%
\section{Preliminaries}
\label{sec:prelims}
%%%%%%%%%%%%%%%%%%%%%%%%%%%%%%%%%%%%%%%%%%%%%%

	We denote $\mathbb{N}$ for the set of natural numbers without $0$ and $\mathbb{N}_0:=\mathbb{N}\cup \{0\}$. We denote $\mathbb{Z}[1/2]=\{m/2^n;\, m\in \mathbb{Z}, n\in \mathbb{N}\}$ for the set of \emph{dyadic rational numbers}. %$\mathbb{Z}[1/2]$ is the smallest subring of $\mathbb{Q}$ that contains $\mathbb{Z}$ and $1/2$.
	In a topological space we denote 
	$\partial A$ for the topological boundary and $\overline{A}$ for the closure of a subset $A$. 

\subsection{Locally compact Abelian groups}

An Abelian group $G$ equipped with a locally compact Hausdorff topology such that the addition $G\times G\to G\colon (g,g')\mapsto g+g'$ and the inverse $G\to G\colon g\mapsto -g$ are continuous is called a \emph{locally compact Abelian group} (\emph{LCA group}). We denote the neutral element of $G$ by $0$. For subsets $A,B\subseteq G$ and $g\in G$ we denote the \emph{Minkowski sum} $A+B:=\{a+b;\, a\in A, b\in B\}$ and $A+g:=\{a+g;\, a\in A\}$. 
	We write $A\oplus B$ for $A+B$, whenever each element of $A+B$ has a unique representation $a+b$ with $a\in A$ and $b\in B$. 
	A LCA group $G$ is called \emph{$\sigma$-compact}, whenever there exists a countable family of compact subsets of $G$ that covers $G$. 
	A LCA group $G$ is called \emph{compactly generated}, whenever there exists a compact subset $K$, such that each element of $G$ can be written as a finite sum of elements from $K$. 
	
\subsubsection{Haar measure}
	
	If $G$ is a LCA group, then there exists a \emph{Haar measure} $\theta$ on $G$, i.e.\  there exists a (non-trivial positive) regular Borel measure $\theta$ that satisfies $\theta(A)=\theta(A+g)$ for all Borel sets $A\subseteq G$ and all $g\in G$. For a Haar measure $\theta$ there holds $\theta(U)>0$ for all non-empty open subsets $U\subseteq G$ and for subsets $A\subseteq G$ there holds $\theta(A)<\infty$, whenever $A$ is precompact. A Haar measure is unique up to scaling, i.e.\ if $\theta$ and $\nu$ are Haar measures on $G$, then there is $c>0$ such that $\theta(A)=c\nu(A)$ for all Borel measurable sets $A\subseteq G$. %(see \cite[Theorem 10.14]{Folland}).
 If nothing else is mentioned, we denote a Haar measure of a LCA group $G$ by $\theta$. Note that below we will pay special interest to $\sigma$-compact groups for which the Haar measure is $\sigma$-finite. 
	For details on these notions and further reference see \cite{Folland,deitmar2014principles}. 
	
\subsubsection{Van Hove sequences}
	Consider a $\sigma$-compact LCA group $G$. 
	For $A,K\subseteq G$ we denote $\partial_K A:=(K+A)\cap\overline{K+(G\setminus A)}$. 
	A sequence $(A_n)_{n\in \mathbb{N}}$ of compact subsets is called \emph{van Hove}, whenever \[\lim_{n\to \infty}\theta(\partial_K A_n)/\theta(A_n)=0\]
	 for each compact subset $K\subseteq G$. 
	
	%A van Hove sequence $(A_n)_{n\in \mathbb{N}}$ is called \emph{uniformly compactly connected to $0$}, whenever there exists a compact set $K\subseteq G$ such that for each $A_n$ and each $a\in A_n$ there exists a finite sequence $(a_n)_{n=0}^N$ such that $a_0=0$, $a_N=a$ and furthermore $a_n-a_{n-1}\in K$ for all $n\in \{1,\cdots, N\}$.  
	
	A subset $A\subseteq G$ is called a \emph{tile}, whenever there exists a countable subset $\omega$ of $G$ such that $G=A+\omega$ and such that there holds $\theta((A+g)\Delta (A+h))=0$ for all distinct $g,h\in \omega$. Note that whenever $H$ is a subgroup of $G$, then $A\subseteq H$ is a tile in $H$, if and only if it is a tile in $G$. 
	 A van Hove sequence $(A_n)_{n\in \mathbb{N}}$ is said to be a \emph{van Hove sequence of symmetric tiles}, whenever each $A_n$ is a tile and \emph{symmetric} ($A_n=\{-a;\, a\in A_n\}$). Each compactly generated LCA group $G$ is isomorphic (as a topological group) to an LCA group of the form $\mathbb{R}^a\times \mathbb{Z}^b\times C$ with $a,b\in \mathbb{N}_0$ and a compact Abelian group $C$ \cite[Theorem 4.2.2]{deitmar2014principles} and the latter contains the van Hove sequence of symmetric tiles $([-n,n]^a\times\{-n,\ldots,n\}^b\times C)_{n\in \mathbb{N}}$.  Thus each compactly generated LCA group contains a van Hove sequence of symmetric tiles. More generally the following is shown in \cite[Page 3]{Schlottmann}. 	

\begin{lemma}\label{lem:weisslemmatilingvanhoveexists}
		Any $\sigma$-compact LCA group contains a van Hove sequence of symmetric tiles. 
	\end{lemma}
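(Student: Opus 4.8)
The plan is to reduce to the compactly generated case recalled just before the statement, by exhausting $G$ by compactly generated open subgroups and then diagonalising. First I would produce an increasing sequence $G_1\subseteq G_2\subseteq\cdots$ of open, compactly generated subgroups of $G$ with $\bigcup_k G_k=G$ and such that every compact subset of $G$ is contained in some $G_k$. To this end, choose a compact symmetric neighbourhood $U$ of $0$ and let $H$ be the subgroup generated by $U$; then $H$ is open and compactly generated, so $G/H$ is discrete, and since $G$ is $\sigma$-compact the discrete group $G/H$ is countable. Write $G/H$ as an increasing union of finitely generated subgroups $D_k$, let $\pi\colon G\to G/H$ be the quotient map, and put $G_k:=\pi^{-1}(D_k)$. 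Each $G_k$ is an open subgroup, generated by a compact generating set of $H$ together with finitely many lifts of generators of $D_k$, hence compactly generated; and if $K\subseteq G$ is compact then $\pi(K)$ is a compact, hence finite, subset of the discrete group $G/H$, so $\pi(K)\subseteq D_k$ and thus $K\subseteq G_k$ for all large $k$.

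Next, for each $k$ the structure theorem provides a topological isomorphism $G_k\cong\mathbb{R}^{a_k}\times\mathbb{Z}^{b_k}\times C_k$ with $C_k$ compact, and — exactly as recalled before the statement — pulling back the sets $[-n,n]^{a_k}\times\{-n,\dots,n\}^{b_k}\times C_k$ to $G_k$ via this isomorphism yields a van Hove sequence $(V^{(k)}_n)_{n\in\mathbb{N}}$ of symmetric tiles of $G_k$. Since $G_k$ is a subgroup of $G$, each $V^{(k)}_n$ is a compact symmetric tile of $G$ as well, by the remark on tiles in subgroups. The observation that makes the diagonalisation work is that an open subgroup is also closed: for a subset $A\subseteq G_k$ and a compact $K\subseteq G_k$ one has $K+A\subseteq G_k$ and $\overline{K+(G_k\setminus A)}\subseteq G_k$, while $K+(G\setminus G_k)$ is contained in the clopen complement $G\setminus G_k$; hence the van Hove boundary $\partial_K V^{(k)}_n$ computed in $G$ coincides with the one computed in $G_k$, and therefore $\theta(\partial_K V^{(k)}_n)/\theta(V^{(k)}_n)\to0$ as $n\to\infty$ for every compact $K\subseteq G_k$.

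Finally I would diagonalise. Fix an exhaustion $K_1\subseteq K_2\subseteq\cdots$ of $G$ by compact sets whose interiors cover $G$ (such an exhaustion exists because $G$ is $\sigma$-compact and locally compact); then every compact subset of $G$ lies in some $K_m$. Let $\kappa(m)$ be minimal with $K_m\subseteq G_{\kappa(m)}$, and, using the previous paragraph, choose $n_1<n_2<\cdots$ so that $\theta(\partial_{K_m}V^{(k)}_{n_k})/\theta(V^{(k)}_{n_k})<1/k$ for every $m\le k$ with $\kappa(m)\le k$; for each $k$ these are finitely many conditions, each achievable once $n_k$ is large. Because $\partial_K A$ is monotone in $K$, the sequence $(V^{(k)}_{n_k})_{k\in\mathbb{N}}$ is then a van Hove sequence of symmetric tiles of $G$: given a compact $K$, pick $m$ with $K\subseteq K_m$, so that $\theta(\partial_K V^{(k)}_{n_k})/\theta(V^{(k)}_{n_k})\le\theta(\partial_{K_m}V^{(k)}_{n_k})/\theta(V^{(k)}_{n_k})<1/k$ for all $k\ge\max\{m,\kappa(m)\}$.

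The only step I expect to require genuine care is the claim, in the second of these paragraphs, that the van Hove boundary of a subset of the clopen subgroup $G_k$ may be computed intrinsically inside $G_k$; the remaining ingredients — that an LCA group has an open compactly generated subgroup, that a discrete quotient of a $\sigma$-compact group is countable, that a compact set meets only finitely many cosets of an open subgroup, and the structure theorem already used in the text — are standard.
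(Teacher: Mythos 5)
Your proof is correct and follows essentially the same route as the paper: exhaust $G$ by open, compactly generated subgroups, pull a van Hove sequence of symmetric tiles from each via the compactly generated case, and diagonalise against a compact exhaustion. The differences are only in detail: the paper obtains its exhausting subgroups as the subgroups generated by $K_n\cup O$ (after reducing, via the structure theorem, to a group with a compact open subgroup $O$), whereas you pass through the countable discrete quotient $G/H$; and you make explicit the clopen-subgroup boundary comparison and the monotonicity of $K\mapsto \partial_K A$ that the paper leaves implicit.
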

%	\begin{proof}
%	Any LCA group is isomorphic to a LCA group of the form $\mathbb{R}^a\times H$, where $a\in \mathbb{N}_0$ and $H$ is an LCA group that contains an open and compact subgroup \cite[Theorem 4.2.1]{deitmar2014principles}. We thus assume without lost of generality that $G$ contains an open and compact subgroup $O$. 
%	Since $G$ is furthermore assumed to be $\sigma$-compact there is a sequence of compact sets $(K_n)_{n\in \mathbb{N}}$ such that $\bigcup_{n\in \mathbb{N}}K_n=G$ and $K_n\subseteq K_{n+1}$ for all $n\in \mathbb{N}$.
%	 The subgroup $H_n$ generated by $K_n\cup O$ has non-empty interior and thus the restricted Haar measure of $G$ is a Haar measure on $H_n$. 
%	$H_n$ is a compactly generated LCA group and thus contains a van Hove sequence of symmetric tiles. In particular, we can choose a symmetric tile $A_n\subseteq H_n$ which satisfies $\theta(\partial_{K_n}A_n)/\theta(A_n)\leq 1/n$. The sequence $(A_n)_{n\in \mathbb{N}}$ is then a van Hove sequence of symmetric tiles in $G$. 
%	\end{proof}

\subsection{Delone sets}

	A subset $\omega$ of a $\sigma$-compact LCA group $G$ is called \emph{relatively dense}, whenever there exists a compact subset $K\subseteq G$ such that $K+\omega=G$. For a compact neighbourhood of $0$ we furthermore say that $\omega$ is $V$-discrete whenever $\{V+g;\, g \in \omega\}$ is a disjoint family. $\omega$ is called \emph{uniformly discrete}, whenever it is $V$-discrete for some pre-compact neighbourhood of $0$. A Delone set is a uniformly discrete and relatively dense set. 
	
\subsubsection{Finite local complexity and uniform patch frequencies}
	Consider a Delone set $\omega\subseteq G$. For a compact subset $A\subseteq G$ we denote $\operatorname{Pat}_\omega(A):=\{(\omega-g)\cap A;\, g\in \omega\}$. The elements of  $\operatorname{Pat}_\omega(A)$ are called the \emph{$A$-patches} of $\omega$. A \emph{patch of $\omega$} is a set which is an $A$-patch with respect to some compact subset $A\subseteq G$.  
	A Delone set $\omega$ is said to be of \emph{finite local complexity} (\emph{FLC}), whenever  $\operatorname{Pat}_\omega(A)$ is finite for all compact subsets $A\subseteq G$.	
	Given a van Hove sequence $(A_n)_{n\in \mathbb{N}}$ a non-empty $A$-patch $P$ of $\omega$ is said to have a \emph{frequency along $(A_n)_{n\in \mathbb{N}}$}, whenever the limit
\[\lim_{n\to \infty}\frac{|\{g\in A_n;\, (\omega-g)\cap A=P\}|}{\theta(A_n)}\]
exists. $P$ is said to have a \emph{uniform frequency}, whenever it has a frequency along any van Hove sequence $(A_n)_{n\in \mathbb{N}}$. Note that the corresponding limit is then automatically independent of the choice of a van Hove sequence. 

\begin{remark}\label{rem:frequencyGeplapper}
		Let $\omega$ be a FLC Delone set in $G$, $A\subseteq G$ be a compact subset and $P\in \operatorname{Pat}_\omega(A)$. 
		For any van Hove sequence $(A_n)_{n\in \mathbb{N}}$ the following statements are equivalent. 
	\begin{itemize}
	\item[(i)] $P$ has a uniform frequency. 
	\item[(ii)] For all sequences $(s_n)_{n\in \mathbb{N}}$ in $G$ the patch $P$ has a uniform frequency along $(A_n+s_n)_{n\in \mathbb{N}}$. 
	\end{itemize}
	Indeed, it is straight forward to observe from \cite[Corollary 1.12]{lenz2020pure} that for uniformly discrete subsets $u\subseteq G$ the convergence of $|u\cap B_n|/\theta(B_n)$ for all van Hove sequences $B_n$ is equivalent to the convergence of $|u\cap (B_n+s_n)|/\theta(B_n)$ for all sequences $(s_n)_{n\in \mathbb{N}}$ in $G$. Considering the uniformly discrete set $u_{P}:=\{g\in \omega;\, (\omega-g)\cap A=P\}$ the statement follows. 
\end{remark}

 	A FLC Delone set $\omega$ is said to have \emph{uniform patch frequencies} (UPF, also \emph{uniform cluster frequencies}), whenever every non-empty patch of $\omega$ has a uniform frequency.

\subsubsection{Pure point diffraction}\label{PPDref}

	Let $G$ be a $\sigma$-compact LCA group and $\omega$ be a FLC Delone set in $G$.
	A measure $\gamma$ on $G$ is said to be the \emph{unique autocorrelation} of $\omega$, whenever for each van Hove sequence $(A_n)_{n\in \mathbb{N}}$ the sequence 
	% Old version, corrected: $1/\theta(A_n)\sum_{g\in (A_n\cap \omega)-(A_n\cap \omega)}\delta_g$ 
	$1/\theta(A_n)\sum_{g,g'\in (A_n\cap \omega)}\delta_{g-g'}$ 
converges to $\gamma$ with respect to the weak*-topology on the set of all complex regular Borel measures on $G$. Here $\delta_g$ denotes the \emph{Dirac measure} on $g\in G$. 
	See \cite[Theorem 3.2 and Theorem 3.4]{Schlottmann} for the following result. 
\begin{proposition}\label{pro:existenceofuniqueautocorrelation}
	Every FLC Delone set in $G$ with UPF has a unique autocorrelation. 
\end{proposition}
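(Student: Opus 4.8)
The plan is to verify the defining convergence directly by pairing with test functions $f\in C_c(G)$, to identify the limit with an explicit expression in terms of the patch frequencies, and to read off from that expression both that the limit is a positive regular Borel measure and that it is the same for every van Hove sequence. Fix a van Hove sequence $(A_n)_{n\in\mathbb N}$ and set $\eta_n:=\frac1{\theta(A_n)}\sum_{g\in(A_n\cap\omega)-(A_n\cap\omega)}\delta_g$, so that, for $f\in C_c(G)$ and with $S:=\operatorname{supp}(f)$,
\[\eta_n(f)=\frac1{\theta(A_n)}\sum_{x\in A_n\cap\omega}\sum_{y\in A_n\cap\omega}f(x-y).\]
Because $\omega$ is uniformly discrete the point count $|\omega\cap(g+M)|$ is bounded uniformly in $g\in G$ for every compact $M\subseteq G$; combined with $\theta(A_n+V)/\theta(A_n)\to1$ (van Hove, with $V$ a compact neighbourhood of $0$ for which $\omega$ is $V$-discrete) this yields, for each compact $M$, a constant $C_M$ with $\eta_n(g+M)\le C_M$ for all $g\in G$ and all large $n$. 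In particular $|\eta_n(f)|\le C_S\|f\|_\infty$, and since $\omega$ is FLC each $\eta_n$ is a locally finite positive measure.

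First I would localise the inner sum. Set $A:=(-S)\cup\{0\}$, a compact set containing $0$, and put $\phi_f(x):=\sum_{y\in\omega}f(x-y)$, which is a locally finite sum. Substituting $z=y-x$ gives $\phi_f(x)=\sum_{z\in(\omega-x)\cap A}f(-z)$ for every $x\in\omega$, so $\phi_f(x)$ depends only on the patch $(\omega-x)\cap A\in\operatorname{Pat}_\omega(A)$; by FLC there are only finitely many such patches. The key estimate is that $\eta_n(f)$ differs from $\frac1{\theta(A_n)}\sum_{x\in A_n\cap\omega}\phi_f(x)$ only through those $x\in A_n\cap\omega$ for which there exists $y\in\omega\setminus A_n$ with $f(x-y)\neq0$; such $x$ lie in the ``collar'' $\partial_KA_n$ for a suitable compact $K$ (one may take $K$ to contain $S$ and a symmetric neighbourhood of $0$, enlarged once more to absorb $V$), and by uniform discreteness their number is at most $\theta(\partial_KA_n)/\theta(V)$, which is $o(\theta(A_n))$ by the van Hove property, while each such $x$ contributes a bounded amount. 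Hence $\eta_n(f)$ and $\frac1{\theta(A_n)}\sum_{x\in A_n\cap\omega}\phi_f(x)$ have the same limiting behaviour.

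Next I would regroup that sum by patch type. Every $P\in\operatorname{Pat}_\omega(A)$ contains $0$ since $0\in A$, and conversely $0\in(\omega-x)\cap A$ forces $x\in\omega$; therefore
\[\frac1{\theta(A_n)}\sum_{x\in A_n\cap\omega}\phi_f(x)=\sum_{P\in\operatorname{Pat}_\omega(A)}\Big(\sum_{z\in P}f(-z)\Big)\cdot\frac{\bigl|\{x\in A_n;\ (\omega-x)\cap A=P\}\bigr|}{\theta(A_n)}.\]
By the hypothesis of uniform patch frequencies each fraction converges, along \emph{every} van Hove sequence, to the frequency $\operatorname{freq}(P)$ of $P$, and this number does not depend on $(A_n)_{n\in\mathbb N}$. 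Since the index set is finite, $\eta_n(f)\to\gamma(f):=\sum_{P\in\operatorname{Pat}_\omega(A)}\bigl(\sum_{z\in P}f(-z)\bigr)\operatorname{freq}(P)$, with the same value for every van Hove sequence. The functional $f\mapsto\gamma(f)$ is linear and positive ($f\ge0$ forces $\eta_n(f)\ge0$ for all $n$), hence by the Riesz representation theorem it is integration against a positive regular Borel measure $\gamma$, which is moreover translation bounded by the uniform estimate of the first paragraph; and by construction $\eta_n\to\gamma$ in the weak-$\ast$ (vague) topology. As $\gamma$ is independent of the chosen van Hove sequence, it is the unique autocorrelation of $\omega$.

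I expect the main obstacle to be the collar estimate in the second paragraph, where the number of points of $\omega$ in $\partial_KA_n$ must be bounded by a Haar volume — this is exactly where uniform discreteness enters — and the van Hove property then invoked; one has to be careful to choose the compact set $K$ defining the collar large enough to absorb both $S=\operatorname{supp}(f)$ and the discreteness neighbourhood $V$. The remaining ingredients — finiteness of $\operatorname{Pat}_\omega(A)$ from FLC, and passing to the limit in the resulting finite patch-frequency sum — are routine.
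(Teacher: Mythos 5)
Your proof is correct, but it takes a genuinely different route from the paper, which offers no argument of its own and simply cites Schlottmann's Theorems 3.2 and 3.4; there the statement is obtained dynamically, essentially by identifying uniform patch frequencies with unique ergodicity of the Delone dynamical system $\pi_\omega$ on $X_\omega$ and extracting the autocorrelation from ergodic averages over the hull. You instead compute the limit directly from the geometry of $\omega$: the collar estimate (with $K$ enlarged to absorb both $\operatorname{supp} f$ and the discreteness neighbourhood $V$, so that $|\omega\cap\partial_K A_n|\le\theta(\partial_{K+\overline{V}}A_n)/\theta(V)=o(\theta(A_n))$) reduces $\eta_n(f)$ to the average of $\phi_f$ over $A_n\cap\omega$; the choice $A=(-S)\cup\{0\}$ makes $\phi_f(x)$ a function of the $A$-patch at $x$; and since every occurring patch contains $0$, the count over $x\in A_n$ coincides with the count over $x\in A_n\cap\omega$, so the paper's definition of uniform frequency applies verbatim to these (non-empty) patches. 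Finiteness of $\operatorname{Pat}_\omega(A)$ from FLC then allows termwise passage to the limit, and Riesz representation gives the measure. What your approach buys is a self-contained, elementary proof together with an explicit formula for $\gamma$ in terms of patch frequencies; what the cited route buys is the connection to unique ergodicity of $(X_\omega,\pi_\omega)$, which the paper relies on elsewhere anyway. Two reading conventions you adopt are the correct ones and worth making explicit: the defining sum must be understood over pairs $(x,y)\in(A_n\cap\omega)^2$ with multiplicity (taken literally, the paper's sum over the difference set would vaguely tend to $0$ for any FLC set), and the ``weak$^*$'' convergence is vague convergence, i.e.\ testing against $C_c(G)$, the only sensible reading for these unbounded, translation-bounded measures --- your uniform bound $\eta_n(g+M)\le C_M$ supplies exactly what is needed there.
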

	
	If a FLC Delone set $\omega$ possesses a unique autocorrelation $\gamma$, then $\gamma$ is positive definite and hence possesses a well defined Fourier transform $\hat{\gamma}$, a positive measure on the dual group $\hat{G}$ \cite[Chapter 4]{argabright1974fourier}.  In this context $\hat{\gamma}$ is called the \emph{diffraction measure of $\omega$}. We recommend \cite{argabright1974fourier} for further details on Fourier transformation.
	
	A FLC Delone set that possesses a unique autocorrelation is said to be \emph{pure point diffractive}, whenever its diffraction measure is a pure point measure.	
	The following geometric characterization of pure point diffraction can be obtained by combining Proposition \ref{pro:existenceofuniqueautocorrelation} with \cite[Theorem 2.13 and Theorem 2.15]{lenz2020pure}. We denote $A\Delta_V B:=(A\setminus (B+V))\cup (B\setminus (A+V))$ for $A,B,V\subseteq G$. 
	
\begin{proposition}\label{pro:geometric_characterization_ppd}
	Let $\omega$ be a FLC Delone set with UPF and $(A_n)_{n\in \mathbb{N}}$ be a van Hove sequence. The set $\omega$ is pure point diffractive, if and only if for each $\epsilon>0$ and each open neighbourhood $V$ of $0$ the set 
	\[\left\{g\in G;\, \limsup_{n\to \infty}\frac{|A_n\cap (\omega\Delta_V(\omega-g))|}{\theta(A_n)}<\epsilon\right\}\] 
	is relatively dense in $G$. 
\end{proposition}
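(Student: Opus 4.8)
The plan is to read the displayed condition as the combinatorial shadow of the mean-almost-periodicity characterisation of pure point diffraction in \cite{lenz2020pure}, and to obtain the equivalence by combining \cite[Theorem 2.13 and Theorem 2.15]{lenz2020pure} with Proposition \ref{pro:existenceofuniqueautocorrelation}. First I would pass from $\omega$ to its Dirac comb $\delta_\omega:=\sum_{x\in\omega}\delta_x$, which is translation bounded because $\omega$ is uniformly discrete and relatively dense. By Proposition \ref{pro:existenceofuniqueautocorrelation} the FLC set $\omega$ with uniform patch frequencies has a unique autocorrelation $\gamma$, so $\delta_\omega$ has an autocorrelation computed along any van Hove sequence and, by definition, $\omega$ is pure point diffractive precisely when $\hat\gamma$ is a pure point measure. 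This is exactly the class of measures to which \cite{lenz2020pure} applies.

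Next I would invoke \cite[Theorem 2.13]{lenz2020pure}, the mean-almost-periodicity characterisation: such a measure is pure point diffractive if and only if for every $\epsilon>0$ the set of its $\epsilon$-almost periods --- those $g\in G$ for which the mean (upper density along a van Hove sequence) of the mismatch between $\delta_\omega$ and its $g$-translate is smaller than $\epsilon$ --- is relatively dense in $G$. Before using this I would record that for $\omega$ of finite local complexity with uniform patch frequencies the relevant upper density is a genuine limit and is independent of the chosen van Hove sequence: for fixed $g$ and fixed open $V\ni 0$, whether a point $x\in\omega$ belongs to $\omega\setminus((\omega-g)+V)$ is decided by the patch $(\omega-x)\cap A$ on any compact set $A$ with $A\supseteq(g-V)\cup\{0\}$, and similarly for the points of $\omega-g$ contributing to $\omega\Delta_V(\omega-g)$; hence $|A_n\cap(\omega\Delta_V(\omega-g))|$ is a finite sum of patch-counting functions, each of which has a uniform frequency.

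The bridge from the measure-theoretic mismatch seminorm to the combinatorial quantity $|A_n\cap(\omega\Delta_V(\omega-g))|$ is then supplied by \cite[Theorem 2.15]{lenz2020pure} for Delone sets of finite local complexity: up to multiplicative constants depending only on the packing and covering radii of $\omega$, the mean mismatch of $\delta_\omega$ and $\delta_{\omega-g}$ is comparable to $\limsup_{n\to\infty}|A_n\cap(\omega\Delta_V(\omega-g))|/\theta(A_n)$, with the tolerance neighbourhood $V$ entering because in a general LCA$_\sigma$ group two nearby Delone sets need not agree exactly after translation on a large region. Combining this with the monotonicity $\omega\Delta_{V'}(\omega-g)\subseteq\omega\Delta_{V}(\omega-g)$ for $V'\subseteq V$, I would convert the statement ``for every $\epsilon>0$ the $\epsilon$-almost periods are relatively dense'' into ``for every $\epsilon>0$ and every open neighbourhood $V$ of $0$ the displayed set is relatively dense'', which is exactly the assertion; both implications of the proposition then follow from the equivalence in \cite[Theorem 2.13]{lenz2020pure}.

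I expect the main obstacle to be bookkeeping rather than substance: matching the precise seminorm and the precise notion of an $\epsilon$-almost period used in \cite{lenz2020pure} with the $\Delta_V$-formulation here, and getting the interplay of the two quantifiers (over $\epsilon$ and over $V$) right, so that the single intrinsic mean-almost-periodicity condition becomes equivalent to the doubly quantified geometric one. Finite local complexity is what keeps this under control, since for each fixed $V$ there are only finitely many local patterns to track, so that densities of mismatched points and the measure-theoretic distances are comparable up to constants independent of $g$.
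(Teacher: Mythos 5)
Your proposal follows essentially the same route as the paper: the paper derives this proposition precisely by combining Proposition \ref{pro:existenceofuniqueautocorrelation} (existence of the unique autocorrelation for FLC Delone sets with uniform patch frequencies) with the mean-almost-periodicity characterisation of pure point diffraction in \cite[Theorem 2.13 and Theorem 2.15]{lenz2020pure}, which is exactly the combination you describe. Your additional bookkeeping on translating the seminorm condition into the $\Delta_V$-density condition is consistent with how the cited results are meant to be applied, so the argument is fine.
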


\subsection{Cut and project schemes and regular model sets}

	Consider LCA$_\sigma$ groups $G$ and $H$ and a uniform lattice $\mathcal{L}$ in $G\times H$. Then $(G,H,\mathcal{L})$ is called a \emph{cut and project scheme}, whenever the restricted projection map $\pi_G|_{\mathcal{L}}$ is injective and the projection $\pi_H(\mathcal{L})$ is dense in $H$. 
	
	\begin{exa}
		$(\mathbb{R}, \mathbb{Q}_2, \{(g,g);\, g\in \mathbb{Z}[1/2]\})$ and $(\mathbb{Q}_2,\mathbb{R}, \{(g,g);\, g\in \mathbb{Z}[1/2]\})$ are cut and project schemes. 
	\end{exa} 
	
	Given a subset $W\subseteq H$ with compact closure and nonempty interior that satisfies $\theta_H(\partial W)=0$ we call $\mathcal{L}(W):=\pi_G(\mathcal{L}\cap (G\times W))$ a \emph{regular model set}. 	
	$W$ is called a \emph{window} in this context. 
	
	\begin{proposition}
	\label{pro:regularmodelsetsppd}
		Any regular model set is a pure point diffractive FLC Delone set with UPF. See \cite{Schlottmann}. 
	%See \cite[Theorem 4.5]{Schlottmann}.
	\end{proposition}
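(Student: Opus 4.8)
The plan is to establish, for a regular model set $\omega=\Lambda(W)$ in a cut and project scheme $(G,H,\Lambda)$, the five assertions hidden in the statement — uniformly discrete, relatively dense, FLC, uniform patch frequencies, and pure point diffractive — in that order, working throughout with the \emph{star map} $(\cdot)^\star\colon\pi_G(\Lambda)\to H$, $x^\star:=\pi_H\bigl((\pi_G|_\Lambda)^{-1}(x)\bigr)$, which is well defined because $\pi_G|_\Lambda$ is injective and which satisfies both $\Lambda(W)=\{x\in\pi_G(\Lambda);\,x^\star\in W\}$ and the translation rule $\Lambda(W)-\pi_G(\lambda)=\Lambda\bigl(W-\pi_H(\lambda)\bigr)$ for $\lambda\in\Lambda$. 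Uniform discreteness: since $\Lambda$ is closed and discrete in $G\times H$ and $\pi_G|_\Lambda$ is injective, a short properness argument (subnets cannot accumulate) shows that $\pi_G$ maps $\Lambda\cap(G\times\overline{W})$ onto a uniformly discrete subset of $G$, and $\omega\subseteq\pi_G(\Lambda\cap(G\times\overline{W}))$. Relative density: $W$ has nonempty interior, say $\operatorname{int}(W)\supseteq s+U_0$ for a neighbourhood $U_0$ of $0$ in $H$, and combining relative density of $\Lambda$ in $G\times H$ with density of $\pi_H(\Lambda)$ in $H$ produces, by the standard slice argument, a compact $C\subseteq G$ with $C+\Lambda(s+U_0)=G$; since $\Lambda(s+U_0)\subseteq\omega$, also $C+\omega=G$.

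For FLC, observe that any lattice point occurring in an $A$-patch of $\omega$ has $G$-coordinate in $A$ and $H$-coordinate in $W-W$; as $\Lambda$ is discrete and $A\times(W-W)$ is precompact, $\Lambda\cap\bigl(A\times(W-W)\bigr)$ is finite, so only finitely many subsets can arise as $A$-patches and $\operatorname{Pat}_\omega(A)$ is finite. For uniform patch frequencies I would invoke the equidistribution of the uniform lattice $\Lambda$ in $G\times H$ along van Hove sequences (see \cite{Schlottmann}): for a fixed $A$-patch $P$ one checks that $\{g\in\omega;\,(\omega-g)\cap A=P\}=\Lambda(W_P)$ for a set $W_P$ obtained from $W$ by finitely many translations, intersections and set-differences, so that $\partial W_P$ is contained in a finite union of translates of $\partial W$ and is therefore $\theta_H$-null by the regularity hypothesis $\theta_H(\partial W)=0$; equidistribution then yields $\lim_n|A_n\cap\Lambda(W_P)|/\theta(A_n)=\theta_H(W_P)/\theta_{G\times H}(F)$ along every van Hove sequence $(A_n)_{n\in\N}$, with $F$ a fundamental domain of $\Lambda$. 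Hence every patch has a uniform frequency, and Proposition \ref{pro:existenceofuniqueautocorrelation} supplies the unique autocorrelation of $\omega$.

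It remains to prove pure point diffractiveness, for which I would apply the geometric characterization of Proposition \ref{pro:geometric_characterization_ppd}. Fix $\epsilon>0$ and an open neighbourhood $V$ of $0$ in $G$. Using $\theta_H(\partial W)=0$ together with regularity of $\theta_H$, choose an open precompact set $O\supseteq\partial W$ with $\theta_H(\overline{O})$ as small as needed, and then a symmetric precompact neighbourhood $U_0$ of $0$ in $H$ with $(W\setminus O)+U_0\subseteq W$; an elementary computation then gives $W\triangle(W-t)\subseteq O\cup(O-t)$ for every $t\in U_0$. For $\lambda\in\Lambda$ with $t:=\pi_H(\lambda)\in U_0$ and $g:=\pi_G(\lambda)$, the translation rule gives $\omega-g=\Lambda(W-t)$, whence $\omega\,\Delta_V\,(\omega-g)\subseteq\{x\in\pi_G(\Lambda);\,x^\star\in O\cup(O-t)\}$; by the equidistribution above together with outer regularity, the upper van Hove density of this last set is at most $\bigl(\theta_H(\overline{O})+\theta_H(\overline{O}-t)\bigr)/\theta_{G\times H}(F)$, which is below $\epsilon$ by the choice of $O$. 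Since $\{\pi_G(\lambda);\,\lambda\in\Lambda,\,\pi_H(\lambda)\in U_0\}=\Lambda(U_0)$ is relatively dense (again by the slice argument used for relative density of $\omega$), Proposition \ref{pro:geometric_characterization_ppd} shows that $\omega$ is pure point diffractive.

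The step I expect to require the most care is this last one: turning the regularity hypothesis $\theta_H(\partial W)=0$ into an upper density bound for the ``boundary layer'' $\{x\in\pi_G(\Lambda);\,x^\star\in W\triangle(W-t)\}$ that is uniform in the small translation $t\in U_0$, since this is precisely where the equidistribution of $\Lambda$ in $G\times H$ must be married to an outer-regular approximation of the $\theta_H$-null set $\partial W$. A viable alternative to the geometric route for pure point diffractiveness would be to show that the hull $(X_\omega,G)$ is an almost everywhere one-to-one extension of the Kronecker system $(G\times H)/\Lambda$ and to invoke the equivalence of pure point dynamical and diffraction spectra; but the argument through Proposition \ref{pro:geometric_characterization_ppd} is self-contained given the results already available.
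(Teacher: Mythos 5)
Your proposal is essentially correct, but note that the paper does not prove this proposition at all: its ``proof'' is the bare citation to \cite{Schlottmann}, where the statement (for regular model sets) is established via the torus parametrization, unique ergodicity and the pure point dynamical spectrum of the associated hull. What you do is reconstruct the standard geometric argument: star map and window manipulations give uniform discreteness, relative density and FLC; the Weyl-type uniform distribution theorem along van Hove sequences (which is itself the main content of \cite{Schlottmann}) gives uniform patch frequencies; and you then route pure point diffractiveness through the characterization of Proposition \ref{pro:geometric_characterization_ppd}, i.e.\ through \cite{lenz2020pure}, rather than through Schlottmann's dynamical argument. This is a genuinely different (and more ``modern'') route for the diffraction part, and it has the virtue of staying entirely at the level of densities of boundary layers $\Lambda(W\Delta(W-t))$; its cost is that the irreducible analytic input — equidistribution of $\Lambda$ relative to the window along \emph{arbitrary} van Hove sequences, including the upper-density bound $\theta_H(\overline{W'})/\theta_{G\times H}(F)$ for precompact $W'$ — is still exactly Schlottmann's theorem, so your proof is a reduction to that result rather than a replacement of it. Two small points to tighten: the paper's frequency counts $g\in A_n$ ranging over all of $G$, not over $\omega$, so your identification $\{g\in\omega;\,(\omega-g)\cap A=P\}=\Lambda(W_P)$ should be redone for $\{g\in G;\,(\omega-g)\cap A=P\}$ (same construction, with the candidate set of points $y$ taken from $\pi_G(\Lambda)\cap A$ with $y^\star$ in a fixed precompact set); and the upper-density bound via $\theta_H(\overline{O})$ needs an open null-boundary superset of $\overline{O}$ of nearly the same measure, which one gets by the usual Urysohn level-set trick ($\{f>c\}$ has null boundary for all but countably many $c$). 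With these routine repairs the argument goes through.
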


\subsection{Actions of LCA groups}
	Let $G$ be a LCA group and $X$ a compact Hausdorff space. 
	A map $\pi\colon G\times X\to X$ is called a \emph{(continuous) action of $G$}, whenever $\pi$ is continuous and whenever $\pi(g,\pi(g',x))=\pi(g+ g',x)$ for $x\in X$ and $g,g'\in G$. In this context we abbreviate $\pi^g:=\pi(g,\cdot)$, call $G$ the \emph{acting group} and $X$ the \emph{phase space}. For further details on this notion see \cite{auslander1988minimalflows}.

	\subsubsection{Uniformity}
	For a compact Hausdorff space $X$ we call the set $\mathbb{U}_X$ of all neighbourhoods in $X\times X$ of the diagonal $\Delta_X:=\{(x,x);\, x\in X\}$ the \emph{uniformity of $X$}. Note that we obtain our
definition to be a special case of the general definition from \cite[Theorem 6.22]{kelley1959measures} and \cite[Theorem 32.3]{Munkres}.
	A subfamily $\mathbb{B}\subseteq \mathbb{U}_X$ is called a \emph{base} of $\mathbb{U}_X$, if every $\eta\in \mathbb{U}_X$ contains a member of $\mathbb{B}$. 
	$\eta\in \mathbb{U}_X$ is called \emph{symmetric}, whenever $\eta=\{(y,x);\, (x,y)\in \eta\}$. 	
	For $x,y\in X$ and symmetric $\eta \in \mathbb{U}_X$ we say that $x$ and $y$ are \emph{$\eta$-close}, whenever $(x,y)\in \eta$. A subset $E\subseteq X$ is called $\eta$-separated, whenever any two distinct elements in $E$ are not $\eta$-close.

\subsubsection{Local rubber topology}
   We denote by $\mathcal{A}(G)$, $\mathcal{K}(G)$ and $\mathcal{N}(G)$ the set of all closed subsets of $G$, compact subsets of $G$ and open neighbourhoods of $0$, respectively. 
For $K\in \mathcal{K}(G)$, $V\in \mathcal{N}(G)$ and $\xi,\zeta\in \mathcal{A}(G)$ we denote 
	\[\xi \overset{K,V}{\approx} \zeta,\]
	\label{sym:approx}whenever there is $\xi\cap K\subseteq \zeta + V$ and $\zeta\cap K\subseteq \xi + V$. 
 Furthermore we define
	\[\epsilon(K,V):=\left\{(\xi,\zeta)\in \mathcal{A}(G)^2;\, \xi \overset{K,V}{\approx} \zeta \right\}.\]
	\label{sym:epsilonKV}Then there exists a compact Hausdorff topology, called the \emph{local rubber topology}, such that 
	\[\mathbb{B}_{lr}:=\{\epsilon(K,V);\, (K,V)\in \mathcal{K}(G)\times \mathcal{N}(G)\}\]
	\label{sym:lrbase}is a base for the corresponding uniformity $\mathbb{U}_{\mathcal{A}(G)}$ on $\mathcal{A}(G)$. See \cite[Theorem 3]{baake2004dynamical} for reference. %We call this base the \emph{local rubber base}. %The uniformity is called the \emph{local rubber uniformity}.	
	%Note that $\mathcal{D}_{V}(G)$ and $\mathcal{D}_{K,V}(G)$ are compact subsets of $\mathcal{A}(G)$ for every compact subset $K\subseteq G$ and every open neighbourhood $V$ of $0$. 

\subsubsection{Delone actions}
	For a Delone set $\omega\subseteq G$ we denote 
	$D_\omega:=\{\omega+g;\, g\in G\}$
and $X_\omega$ for the closure of $D_\omega$ in $\mathcal{A}(G)$ with respect to the local rubber topology. Then $X_\omega$ is a compact Hausdorff space and $\pi_\omega\colon G \times X_\omega\to X_\omega\colon (g,\xi)\mapsto \xi+g$ is a dynamical system and called the \emph{Delone dynamical system} or the \emph{Delone action} of $\omega$.  For a proof of the continuity of this action see for example \cite{baake2004dynamical}. 
%We denote $\epsilon_\omega(K,V):=\epsilon(K,V)\cap X_\omega^2$ for Delone sets $\omega\subseteq G$. \label{sym:Deloneentouragerestricted}

\subsubsection{Topological entropy of actions}
\label{subsubsec:prelims:topologicalentropy}
	Consider an action $\pi$ of a $\sigma$-compact LCA group $G$ on a compact Hausdorff space $X$. 
	For $\eta\in \mathbb{U}_X$ and a compact subset $A\subseteq G$ we denote 
	\[\eta_A:=\{(x,y)\in X^2;\,\forall g\in A \colon (\pi^g(x),\pi^g(y))\in \eta\}\]
	and obtain from \cite[Lemma 4.2]{hauser2020relative} that there holds $\eta_A\in \mathbb{U}_X$.	
	
	Given a symmetric $\eta\in \mathbb{U}_X$ it is straightforward to show that there exist a finite $\eta$-separated subset of $X$ of maximal cardinality. We denote $\operatorname{sep}_X(\eta)$ for this cardinality and define the \emph{topological entropy of $\pi$} as 
	\[\operatorname{h_{top}}(\pi):=\sup_{\eta}\limsup_{n\to \infty} \frac{\log(\operatorname{sep}_X(\eta_{A_n}))}{\theta(A_n)},\]
	where $(A_n)_{n\in \mathbb{N}}$ is a van Hove sequence and the supremum is taken over all symmetric $\eta\in \mathbb{U}_X$. This notion is independent of the choice of a van Hove sequence as shown in \cite{hauser2020relative}.
	Furthermore, this approach is equivalent to the notions of topological entropy considered (in various degrees of generality) in \cite{Tagi-Zade, hauser2020relative,baake2007pure,hauser2022entropy}. 	
%	See \cite{hauser2020relative} for the following link between the notion from the introduction and this notion of topological entropy. 
%	\begin{proposition}
%		Let $G$ be a $\sigma$-compact LCA group and $\omega$ be a Delone set in $G$. Then there holds $h_{top}(\pi_\omega)=h_{top}(\omega)$. 
%	\end{proposition}

\subsection{Measure theoretical aspects of continuous dynamics}

Let $\pi$ be an action of a $\sigma$-compact LCA group $G$ on a compact Hausdorff space $X$. A regular Borel probability measure $\mu$ on $X$ is called \emph{invariant}, whenever for each Borel subset $A\subseteq X$ there holds $\mu(\{\pi^g(x);\, x\in A\})=\mu(A)$. 
	An action, that allows for exactly one invariant Borel probability measure is called \emph{uniquely ergodic}. For a FLC Delone set $\omega$ the corresponding Delone action $\pi_\omega$ is uniquely ergodic, if and only if $\omega$ has UPF \cite[Corollary 3.3]{Schlottmann}. 

\subsubsection{The Halmos-von Neumann theorem}

	If $\mu$ is an invariant regular Borel probability measure on $X$ we call a non-zero function $f\in L^2(X,\mu)$ an \emph{eigenfunction} of $\pi$, whenever there exists a continuous homomorphism $\chi \colon G\to \mathbb{T}$ such that $f\circ \pi^g=\chi(g) f$ holds in $L^2(X,\mu)$ for all $g\in G$. 
	%In this context $\chi$ is called an \emph{eigenvalue} of $f$ w.r.t.\ $\pi$.  
	The action $\pi$ is said to have \emph{pure point dynamical spectrum}, whenever the linear span of all eigenfunctions is dense in $L^2(X,\mu)$. 
	
	The action $\pi$ is called a \emph{rotation}, if $X$ can be equipped with an Abelian group structure $(X,+)$, such that $(X,+)$ becomes a compact topological group and such that there exists a group homomorphism $\rho\colon G \to X$ with $\pi(g,x)=\rho(g)+x$. 
	In this case the unique invariant regular Borel probability measure on $X$ is the normalized Haar measure. 
	
		Two actions $\pi$ and $\phi$ of an LCA group $G$ on compact Hausdorff spaces $X$ and $Y$ equipped with invariant regular Borel probability measures $\mu$ and $\nu$ respectively are said to be \emph{measure theoretically conjugated}, whenever there is a measure algebra isomorphism that commutes with the measure algebra homomorphisms induced by $\pi^g$ for $g\in G$. 
	For details on the notion of measure algebras, measure algebra homomorphisms and measure algebra isomorphism, we recommend \cite[Chapter 2]{walters1982introduction}.
		
	A straightforward generalization of standard arguments, for example presented in the proof of \cite[Theorem 3]{baake2007pure}, yields the following version of the Halmos-von Neumann theorem.		
	
	\begin{theorem}[Halmos- von Neumann] \label{the:Halmos-vanNeumann}
		Any action $\pi$ of a LCA group on a compact Hausdorff space equipped with an invariant ergodic regular Borel probability measure  that has pure point dynamical spectrum is measure theoretically conjugated to a rotation. 
	\end{theorem}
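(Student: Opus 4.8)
The plan is to adapt the classical proof for $\mathbb{Z}$-actions (as in \cite[Chapter 3]{walters1982introduction}) to an action $\pi$ of an arbitrary LCA group $G$ on a compact Hausdorff space $X$, equipped with an ergodic invariant regular Borel probability measure $\mu$ and having pure point dynamical spectrum. First I would record the standard preliminary facts: since $\mu$ is ergodic, every eigenvalue is simple (if $f_1\circ\pi^g=\chi(g)f_1$ and $f_2\circ\pi^g=\chi(g)f_2$ with $f_1,f_2\neq 0$ then $f_1/f_2$ is $\pi$-invariant, hence constant $\mu$-a.e.), and eigenfunctions can be normalised to have modulus $1$ $\mu$-a.e.\ (the modulus is invariant, hence constant). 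Moreover the eigenvalues form a subgroup $\widehat{G}_{pp}\subseteq\widehat{G}$: products and conjugates of eigenfunctions are again eigenfunctions. Fix for each eigenvalue $\chi$ a normalised eigenfunction $f_\chi$ with $|f_\chi|=1$ a.e.\ and $f_\chi\cdot f_{\chi'}=f_{\chi\chi'}$ a.e.\ (this last multiplicative selection uses that each eigenspace is one-dimensional, so the cocycle obstruction is trivial — one picks representatives and rescales; some care is needed but it is routine).

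Next I would build the rotation factor. Let $Y$ be the dual group $\widehat{\widehat{G}_{pp}}$ of the discrete group $\widehat{G}_{pp}$ of eigenvalues; then $Y$ is a compact Abelian group, and the inclusion $\widehat{G}_{pp}\hookrightarrow\widehat{G}$ dualises to a continuous homomorphism $\rho\colon G\to Y$ with dense image. Define $\phi\colon G\times Y\to Y$ by $\phi(g,y)=\rho(g)+y$; this is a rotation in the sense of the excerpt, and its unique invariant measure $\nu$ is normalised Haar measure on $Y$. The characters of $Y$ are exactly the evaluation maps $\mathrm{ev}_\chi$ for $\chi\in\widehat{G}_{pp}$, and $\mathrm{ev}_\chi\circ\phi^g=\chi(g)\,\mathrm{ev}_\chi$, so $\phi$ has pure point spectrum with the same eigenvalue group.

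Then I would produce the measure-theoretic conjugacy. The assignment $\mathrm{ev}_\chi\mapsto f_\chi$ extends, by linearity and the multiplicativity $f_\chi f_{\chi'}=f_{\chi\chi'}$, to an isometric $*$-algebra embedding of the dense subalgebra $\mathrm{span}\{\mathrm{ev}_\chi\}\subseteq L^2(Y,\nu)$ into $L^2(X,\mu)$ intertwining $\phi^g$ and $\pi^g$. Pure point dynamical spectrum of $\pi$ means exactly that $\mathrm{span}\{f_\chi\}$ is dense in $L^2(X,\mu)$, so this map extends to a surjective isometry $L^2(Y,\nu)\to L^2(X,\mu)$, equivariant under the $G$-actions and preserving the algebra structure where defined; passing to characteristic functions and indicator sets, this induces a measure algebra isomorphism commuting with the homomorphisms induced by $\phi^g$ and $\pi^g$. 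By definition this is a measure-theoretic conjugacy between $\pi$ and the rotation $\phi$, which is what we wanted.

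The main obstacle I anticipate is not conceptual but bookkeeping: making the multiplicative normalisation $f_\chi f_{\chi'}=f_{\chi\chi'}$ precise (one genuinely needs the simplicity of eigenvalues from ergodicity, and a choice — possibly invoking a section of $\mathbb{T}\to\mathbb{T}$ over the discrete group $\widehat{G}_{pp}$, which is unproblematic since $\widehat{G}_{pp}$ is discrete so no continuity is required), and verifying that an $L^2$-isometry which is multiplicative on a dense subalgebra of bounded functions actually comes from a point map / measure algebra isomorphism — this is the standard argument behind the $\mathbb{Z}$-case and transfers verbatim, since nothing there uses the structure of the acting group beyond having a family of commuting measure-preserving transformations. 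I would therefore present the proof as ``a straightforward generalisation'' and only spell out the two points above, referring to \cite[Theorem 3.6 and its proof]{walters1982introduction} and to the argument in \cite[Theorem 3]{baake2007pure} for the remaining routine steps.
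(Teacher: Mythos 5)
Your proposal is correct and follows essentially the same route as the paper, which itself only appeals to the standard Halmos--von Neumann argument (as in the proof of Theorem 3 of Baake--Lenz--Richard): pass to the eigenvalue group, dualize it as a discrete group to obtain a compact group rotation with dense-image homomorphism $\rho$, and extend $\mathrm{ev}_\chi\mapsto f_\chi$ to a measure algebra isomorphism. The two points you flag as needing care are exactly the classical ones (the multiplicative normalisation works because the symmetric cocycle obstruction lies in $\mathrm{Ext}(\widehat{G}_{pp},\mathbb{T})=0$ by divisibility of $\mathbb{T}$, and the passage from a multiplicative $L^2$-isometry to a measure algebra isomorphism is the standard lemma from the $\mathbb{Z}$-case, which indeed uses nothing about the acting group), so your write-up matches the paper's intended proof.
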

		
\subsubsection{Measure theoretical entropy}

	For a finite (Borel-measurable) partition $\alpha$ of $X$ we denote $H_\mu(\alpha):=-\sum_{A} \mu(A)\log(\mu(A))$, where the sum is taken over all $A\in \alpha$ with $\mu(A)\neq 0$. 
	For a finite set $F\subseteq G$ we furthermore denote $\alpha_F$ for the common refinement $\bigvee_{g\in F}\{\pi^{-g}(A);\, A\in \alpha\}:=\{\bigcap_{g\in F}\pi^{-g}(A_g);\, (A_g)_{g\in F}\in \alpha^F\}$. 	 
	Consider a Delone set $\omega$ and a van Hove sequence $(A_n)_{n\in \mathbb{N}}$ in $G$.
	Following \cite{hauser2022entropy} we define the \emph{measure theoretical entropy of $\mu$ and $\pi$} as
	\[h_\mu(\pi):=
	\sup_{\alpha}\limsup_{n\to \infty}\frac{H_\mu(\alpha_{A_n\cap \omega})}{\theta(A_n)},\]
	where the supremum is taken over all finite partitions $\alpha$ of $X$. It is presented in \cite{hauser2022entropy} that this notion does not depend on the choice of the Delone set and the van Hove sequence and that it naturally generalizes the classical notions of entropy like for example considered in \cite{kolmogorov1958new, misiurewicz1976short, stepin1980variational, ollagnier1982variational, Tagi-Zade, baake2007pure}. 

	From \cite[Lemma 3.2]{hauser2020relative} and \cite[Theorem 6.25]{hauser2022entropy} we observe that $h_{top}(\pi)$ is the Ornstein-Weiss topological pressure $p_0^{(OW)}(\pi)$ introduced in \cite{hauser2022entropy}. Note that the measure theoretical entropy introduced in this article is denoted $\operatorname{E}_\mu^{(OW)}(\pi)$ in \cite{hauser2022entropy}. 
	From \cite[Theorem 6.30]{hauser2022entropy} we thus know the following version of Goodwyn's theorem. 
	
\begin{proposition}\label{pro:GoodwynPrelims}
 	For any invariant regular Borel probability measure $\mu$ on $X$ there holds $h_\mu(\pi)\leq h_{top}(\pi)$. 
\end{proposition}

%%%%%%%%%%%%%%%%%%%%%%%%%%%%%%%%%%%%%%%%%%%%%%%%%%%%%%
\subsection{The $p$-adic numbers}
\label{sec:prelims_padicnumbers}
%%%%%%%%%%%%%%%%%%%%%%%%%%%%%%%%%%%%%%%%%%%%%%%%%%%%%%

	For a field $\mathbb{K}$ a mapping $|\cdot|\colon \mathbb{K}\to [0,\infty)$ is called an \emph{absolute value}, whenever for all $x,y\in \mathbb{K}$ we have 
	\begin{itemize}
	\item[(i)] $|xy|=|x||y|$.
	\item[(ii)]	$|x+y|\leq |x|+|y|$.
	\item[(iii)] $|x|=0$ if and only if $x=0$.
	\end{itemize}
	On $\mathbb{Q}$ well known absolute values are the standard and the trivial absolute value\footnote{The \emph{trivial absolute value} maps $|x|=0$ for $x=0$ and $|x|=1$ for $x\neq 0$.}.
	For a prime number $p$ we can furthermore write any (non-zero) rational number uniquely in the form $x=p^n a/b$ with $a,b,n\in \mathbb{Z}$ and such that $p$ does not divide $a$ and $b$. We then define $|x|_p:=p^{-n}$ (and $|0|_p:=0$) and obtain the \emph{p-adic absolute value} $|\cdot|_p$ on $\mathbb{Q}$. 
	From \cite[Theorem 3.2.13]{gouvea1997p} we know that for each prime $p$ there exists a field $\mathbb{Q}_p$ (the field of \emph{$p$-adic numbers}) with an absolute value $|\cdot|_p$, such that the following statements are valid.
\begin{itemize}
	\item[(i)] $\mathbb{Q}\subseteq \mathbb{Q}_p$ is a dense subfield and $|\cdot|_p$ extends the $p$-adic absolute value of $\mathbb{Q}$. 
	\item[(ii)] $\mathbb{Q}_p$ is complete with respect to the metric introduced by $|\cdot|_p$. 
\end{itemize}
	Note that one can also show that $\mathbb{Q}_p$ with the mentioned properties is unique up to a unique isomorphism of fields with absolute values. For every $g\in \mathbb{Q}_p$ there exists $n\in \mathbb{N}$ and a unique sequence $(g_i)_{i=-n}^\infty$ in $\{0,\cdots,p-1\}$ such that the following series converges in $\mathbb{Q}_p$ and such that 
	$g=\sum_{i=-n}^\infty g_i p^i$. 
	We recommend \cite{gouvea1997p} for further reference on $p$-adic numbers. 
%%	
%%	
%%	
%%	
%%	In analogy to $\mathbb{R}$ we obtain from \cite[Corollary 3.3.12]{gouvea1997p} the following.
%%\begin{proposition}\label{pro:prelims:padicexpansion}
%%	For every $g\in \mathbb{Q}_p$ there exists $n\in \mathbb{N}$ and a unique sequence $(g_i)_{i=-n}^\infty$ in $\{0,\cdots,p-1\}$ such that the following series converges in $\mathbb{Q}_p$ and such that 
%%	$g=\sum_{i=-n}^\infty g_i p^i$.
%%\end{proposition} 
%%
%%	Let us next collect some properties about the structure of $\mathbb{Q}_p$, which are shown for example in \cite{gouvea1997p}. 
%%	
%%\begin{proposition}
%%	\begin{itemize}
%%	\item[(i)] 	The absolute value $|\cdot|_p$ (and thus also the metric) on $\mathbb{Q}_p$ takes values in $\{p^k;\, k\in \mathbb{Z}\}$. Thus for $n\in \mathbb{Z}$, $r\in [p^n,p^{n+1})$ and $g\in \mathbb{Q}_p$ there holds $\overline{B}_r(g)=\overline{B}_{p^n}(g)$. 
%%	\item[(ii)] For $r>0$ and $g\in \mathbb{Q}_p$ the ball $\overline{B}_r(g)$ is topologically open and compact. 	
%%	Furthermore whenever $B$ and $B'$ are two balls that intersect then there holds $B\subseteq B'$ or $B'\subseteq B$. For $n\in \mathbb{Z}$ and $g\in \mathbb{Q}_p$ the ball $\overline{B}_{p^{n+1}}(g)$ contains $p$ disjoint closed balls of radius $p^n$.  	
%%	%Similar statements hold for open balls. 
%%	\item[(iii)]$\mathbb{Q}_p$ is metrizable, $\sigma$-compact, totally disconnected and locally compact. Every ball in $\mathbb{Q}_p$ which contains $0$ is a compact and open subgroup of $\mathbb{Q}_p$. 
%%	\end{itemize}	
%%\end{proposition}

%%%%%%%%%%%%%%%%%%%%%%%%
\section{Pure point diffraction implies zero topological entropy}
%%%%%%%%%%%%%%%%%%%%%%%%

In the section we show that FLC Delone sets with UPF have $0$ topological entropy, whenever they are pure point diffractive. In order to do this we will first need to prove a suitable version of the variational principle in our context. After this is done the result will follow by a straightforward generalization of the arguments from \cite{baake2007pure}, which we include for the convenience of the reader. 

\subsection{The variational principle}

	The variational principle is well studied in the context of actions of discrete groups \cite{kolmogorov1958new, misiurewicz1976short, stepin1980variational, ollagnier1982variational}.
	However, there is no proof of the full variational principle in the context of actions of groups that do not contain uniform lattices.
	In \cite{hauser2022entropy} it is shown that topological entropy is an upper bound of measure theoretical entropy. 	
	 Since we are in particular interested in $\mathbb{Q}_2$, a group that contains no non-trivial discrete subgroup, we next present a full proof suitable in the context of aperiodic order. This proof combines some ideas of \cite{misiurewicz1976short} with several new ideas, which rely on the existence of van Hove sequences of symmetric tiles. 
	 
	 We denote $C(X)$ for the set of all continuous maps $f\colon X\to \mathbb{R}$ and identify the set $\mathcal{M}(X)$ of all regular Borel probability measures on $X$ with the set of all continuous, positive and linear functionals on $C(X)$ which map the constant map $(x\mapsto 1)$ to $1$. We equip $\mathcal{M}(X)$ with the respective weak*-topology.  
	We denote $\pi_*^g\mu$ for the push forward of $\mu$, i.e.\ $(\pi_*^g\mu)(f):=\mu(f\circ \pi^g)$.

	\begin{lemma}\label{lem:pointwiseContinuityStaticEntropy}
		Let $\alpha$ be a finite (Borel measurable) partition of $X$. The map $\mathcal{M}(X)\ni \nu \mapsto H_\nu(\alpha)$ is continuous at all $\mu\in \mathcal{M}(X)$ which satisfy $\mu\left(\bigcup_{A\in \alpha}\partial A\right)=0$ for all $A\in \alpha$. 
	\end{lemma}
	\begin{proof}
		Recall that a function $f\colon X\to \mathbb{R}$ is called \emph{upper semicontinuous at $x$}, if $f(x)= \limsup_{y\to x}f(x)$. For more details on this notion see \cite[A.1]{downarowicz2011entropy}. From \cite[A.2.7]{downarowicz2011entropy} we know that $\mathcal{M}\ni \nu\to \nu(\overline{A})$ and $\mathcal{M}\ni \nu\to -\nu(A^\circ)$ are upper semicontinuous for $A\in \alpha$, where $A^\circ$ denotes the topological interior of $A$.
	Let $\mu\in \mathcal{M}(X)$ with $\mu\left(\bigcup_{A\in \alpha}\partial A\right)$. For any $A\in \alpha$ we observe 
	\begin{align*}
	\mu(A)
	=\mu(A^\circ)
	&\leq \liminf_{\nu \to \mu}(A^\circ)
	\leq \liminf_{\nu\to \mu}\nu(A)\\
	&\leq \limsup_{\nu \to \mu}\nu(A)
	\leq \limsup_{\nu \to \mu}\nu(\overline{A})
	\leq \mu(\overline{A})=\mu(A)
	\end{align*}
	and hence $\mu(A)=\lim_{\nu\to \mu}\nu(A)$. We conclude that 
	\[\mathcal{M}(X)\ni \nu \mapsto H_\nu(\alpha)=-\sum_{A\in \alpha}\nu(A)\log(\nu(A))\] is continuous at $\mu$. 
	\end{proof}

	Considering a compact subset $A\subseteq G$ with $\theta(A)>0$ and a regular Borel probability measure $\sigma$ on $X$ we denote $1/\theta(A)\int_{A}\pi^g_*\sigma d\theta(g)$ for the mapping $C(X)\ni f\mapsto 1/\theta(A)\int_{A}\pi_*^g \sigma(f) d\theta(g).$ 
	A straightforward argument shows $1/\theta(A)\int_{A}\pi^g_*\sigma d\theta(g)\in \mathcal{M}(X)$. 
A Borel probability measure $\sigma$ is called \emph{finitely supported}, whenever there exists a finite set $E$ and $p_x\in [0,1]$ with $x\in E$ such that $\sum_{x\in E}p_x=1$ and $\sigma=\sum_{x\in E}p_x\delta_x$.

\begin{lemma}\label{lem:measureintegrationproperties}
	Consider a finitely supported regular Borel probability measure $\sigma$ on $X$ and a compact subset $A\subseteq G$ with $\theta(A)>0$. 
	Denote $\mu:={1}/{\theta(A)}\int_{A}\pi^g_*\sigma d\theta(g)$.
	For any finite partition $\alpha$ of $X$ with $\bigcup_{M\in \alpha}\mu(\partial M)=0$ the map 
	 $A\ni g\mapsto H_{\pi^g_*\sigma}(\alpha)$ 
	 is Lebesgue integrable with respect to the Haar measure and satisfies 
	\[\frac{1}{\theta(A)}\int_A H_{\pi^g_*\sigma}(\alpha)d\theta(g)\leq H_{\mu}(\alpha).\]
\end{lemma}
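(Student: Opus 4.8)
The plan is to use concavity of the entropy functional combined with the fact that $\mu$ is a Haar-average of the measures $\pi^g_*\sigma$. First I would note that, since $\sigma$ is finitely supported and each $\pi^g$ is continuous, the value $H_{\pi^g_*\sigma}(\alpha)$ depends measurably on $g$: for fixed $x$ in the (finite) support of $\sigma$ the set $\{g\in A;\, \pi^g(x)\in C\}$ is Borel for each $C\in\alpha$, so $g\mapsto (\pi^g_*\sigma)(C)=\sum_x p_x \mathbf{1}_{\{\pi^g(x)\in C\}}$ is measurable, and $H_{\pi^g_*\sigma}(\alpha)$ is a finite sum of compositions of these with the (bounded on $[0,1]$) function $t\mapsto -t\log t$; boundedness of the integrand then gives Lebesgue-integrability over the precompact set $A$. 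The hypothesis $\sum_{C\in\alpha}\mu(\partial C)=0$ is what keeps this clean — it guarantees that the boundary effects that could spoil measurability or continuity of $g\mapsto(\pi^g_*\sigma)(C)$ are $\mu$-null, hence (after unwinding the definition of $\mu$ as the average) Haar-null for a.e.\ relevant translate, so we may work with the sets $C$ as if they were $\mu$-continuity sets.

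The core estimate is Jensen's inequality applied to the strictly concave function $\phi(t):=-t\log t$ on $[0,1]$ (with $\phi(0)=0$). For each fixed $C\in\alpha$ we have, by definition of $\mu$,
\[
\mu(C)=\frac{1}{\theta(A)}\int_A (\pi^g_*\sigma)(C)\, d\theta(g),
\]
i.e.\ $\mu(C)$ is the average of the function $g\mapsto(\pi^g_*\sigma)(C)$ with respect to the normalized Haar measure $\tfrac{1}{\theta(A)}\theta|_A$, which is a probability measure on $A$. Concavity of $\phi$ then yields
\[
\phi(\mu(C))\;\geq\;\frac{1}{\theta(A)}\int_A \phi\bigl((\pi^g_*\sigma)(C)\bigr)\, d\theta(g).
\]
Summing this inequality over all $C\in\alpha$ and recognizing $H_\mu(\alpha)=\sum_{C\in\alpha}\phi(\mu(C))$ and $H_{\pi^g_*\sigma}(\alpha)=\sum_{C\in\alpha}\phi\bigl((\pi^g_*\sigma)(C)\bigr)$ gives exactly
\[
H_\mu(\alpha)\;\geq\;\frac{1}{\theta(A)}\int_A H_{\pi^g_*\sigma}(\alpha)\, d\theta(g),
\]
which is the claim. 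One small point to check: $\phi$ is not defined at $\infty$ and the partition sum only runs over cells of positive measure — but since all quantities $(\pi^g_*\sigma)(C)$ and $\mu(C)$ lie in $[0,1]$ and $\phi$ extends continuously to $[0,1]$ by $\phi(0)=0$, there is no issue in letting the sums formally range over all of $\alpha$ and then discarding zero terms.

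I expect the main obstacle to be the measurability/integrability bookkeeping rather than the inequality itself: one has to be a little careful that $g\mapsto (\pi^g_*\sigma)(C)$ is genuinely Borel measurable (not just for $C$ open or closed, but for the partition cells, which are merely Borel), and this is precisely where the assumption $\bigcup_{C\in\alpha}\mu(\partial C)=0$ does its work. Concretely, I would reduce to $\sigma=\delta_x$ by linearity in $\sigma$ (both sides are linear in $\sigma$ except that $H$ is concave — so actually one handles the general finitely supported $\sigma$ directly, writing $(\pi^g_*\sigma)(C)=\sum_{x\in E}p_x\mathbf{1}_C(\pi^g(x))$ and using that $g\mapsto\mathbf{1}_C(\pi^g(x))$ is measurable because $\{g;\pi^g(x)\in C\}$ is a Borel set, being the preimage of the Borel set $C$ under the continuous map $g\mapsto\pi^g(x)$). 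Once measurability is in hand, boundedness of $\phi$ on $[0,1]$ by $1/e$ and $\theta(A)<\infty$ give integrability, and the Jensen step closes the argument.
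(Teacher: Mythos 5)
Your proposal takes a genuinely different route from the paper's, and it is essentially sound, but with one step that is asserted rather than proved. The paper never establishes a set-wise formula for $\mu$: it approximates $\mu$ in the weak*-topology by the net of finite convex combinations $\sum_{B\in\beta}\theta(B)\pi^{\mathfrak{g}(B)}_*\sigma$ indexed by finite partitions $\beta$ of $A$ with choice functions $\mathfrak{g}$, applies the finite concavity estimate it quotes from Walters to each such combination, and then passes to the limit; the hypothesis $\mu(\partial C)=0$ for all cells $C\in\alpha$ is used exactly there, to make $\nu\mapsto H_\nu(\alpha)$ weak*-continuous at $\mu$ (portmanteau). You instead use the set-wise identity $\mu(C)=\frac{1}{\theta(A)}\int_A(\pi^g_*\sigma)(C)\,d\theta(g)$ for each Borel cell $C$ together with Jensen's inequality for $\phi(t)=-t\log t$; granted that identity, your argument is correct, shorter than the paper's, and in fact never uses the boundary hypothesis at all. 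Consequently your explanation of what that hypothesis does in your proof (protecting measurability of $g\mapsto(\pi^g_*\sigma)(C)$) is off: measurability is automatic, as your own preimage argument shows, and the hypothesis is only needed on the paper's weak*-approximation route.

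The real gap is the phrase ``by definition of $\mu$''. In the paper $\mu$ is defined only as a positive normalized functional on $C(X)$, hence as a regular Borel measure via Riesz representation; $X$ is merely compact Hausdorff, not metrizable, so the equality of this Riesz representative with the naive set-wise average on all Borel sets is a regularity statement that requires proof, not a definition. It does hold in your setting, precisely because $\sigma$ is finitely supported: writing $\sigma=\sum_{x\in E}p_x\delta_x$, the set-wise average equals $\sum_{x\in E}p_x\,T^x_*\bigl(\theta|_A/\theta(A)\bigr)$ where $T^x(g)=\pi^g(x)$ is continuous, and the pushforward of a Radon probability measure under a continuous map into a compact Hausdorff space is again Radon (inner regularity transfers via images of compact sets), hence coincides with the Riesz representative, i.e.\ with $\mu$. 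Insert that argument and your Jensen proof closes; the trade-off versus the paper is that you need this regularity lemma, while the paper needs the $\mu(\partial C)=0$ hypothesis and the net of Riemann-sum approximations.
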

\begin{proof}
	Consider a finite set $E\subseteq X$ and $p_x\in [0,1]$ with $\sum_{x\in E}p_x=1$ such that $\sigma=\sum_{x\in E}p_x\delta_x$. 
	Rescaling the Haar measure if necessary, we assume without lost of generality that there holds $\theta(A)=1$. 
The map $G\ni g\mapsto H_{\pi^g_* \sigma}(\alpha)$ is constant on the elements of the finite and measurable partition 
$\bigvee_{x\in E}\{\pi(\cdot,x)^{-1}(M);\, M\in \alpha\}$ 
of $G$. Restricted to the compact subset $A$ this map is thus Lebesgue integrable.

	Let $\mathfrak{R}$ be the set of all pairs $(\beta,\mathfrak{g})$ such that $\beta$ is a finite (and measurable) partition of $A$ and such that $\mathfrak{g}\colon \beta\to A$ satisfies $\mathfrak{g}(B)\in B$ for any $B\in \beta$. 
	We order $\mathfrak{R}$ by setting $(\beta,\mathfrak{g})\leq (\beta',\mathfrak{g}')$, whenever $\beta'$ is finer than $\beta$, i.e.\ whenever for each $B'\in \beta'$ there is $B\in \beta$ with $B'\subseteq B$. $\mathfrak{R}$ is easily seen to be a directed partially ordered set and a straightforward argument shows that 
	\[\left(\sum_{B\in \beta}\theta(B)\pi^{\mathfrak{g}(B)}_*\sigma\right)_{(\beta,\mathfrak{g})\in \mathfrak{R}}\]
	converges to $\mu$ with respect to the weak*-topology. 
	 From Lemma \ref{lem:pointwiseContinuityStaticEntropy} we observe that $\mathcal{M}(X)\ni\nu\mapsto H_\nu(\alpha)$ is continuous at $\mu$ and in particular that 
	\[H_{\sum_{B\in \beta}\theta(B)\pi^{\mathfrak{g}(B)}_*\sigma}(\alpha) \overset{(\beta,\mathfrak{g})\in \mathfrak{R}}{\rightarrow} H_{\mu}(\alpha).\]
	For $(\beta,\mathfrak{g})\in \mathfrak{R}$ with $\beta$ finer than the induced partition of $\bigvee_{x\in E}\{\pi(\cdot,x)^{-1}(M);\, M\in \alpha\}$ on $A$ we observe from a well known argument (for example contained in the proof of \cite[Theorem 8.1]{walters1982introduction}) that  
	\[\int_A H_{\pi^g_*\sigma}(\alpha) d\theta(g)
	=\sum_{B\in \beta}\theta(B)H_{\pi^{\mathfrak{g}(B)}_*\sigma}(\alpha)
	\leq H_{\sum_{B\in \beta}\theta(B)\pi^{\mathfrak{g}(B)}_*\sigma}(\alpha)\]
	and the claimed inequality follows. 	
\end{proof}

\begin{theorem}[Variational principle]
	\label{the:VP}
	Let $\pi$ be an action of a $\sigma$-compact LCA group $G$ on a compact Hausdorff space $X$. 
	We have $h_{top}(\pi)=\sup_{\mu}h_\mu(\pi),$
	where the supremum is taken over all invariant regular Borel probability measures $\mu$ on $X$. 
\end{theorem}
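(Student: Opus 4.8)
The plan is to establish the two inequalities separately. The inequality $\sup_\mu h_\mu(\pi)\le\operatorname{h_{top}}(\pi)$ is already available: it is the assertion that topological entropy dominates measure-theoretic entropy, proved in \cite{hauser2022entropy}, so here I would only cite it. All the work lies in the reverse inequality $\operatorname{h_{top}}(\pi)\le\sup_\mu h_\mu(\pi)$, for which the plan is to adapt Misiurewicz's short proof of the variational principle \cite{misiurewicz1976short} to the present non-discrete setting. Fix a symmetric $\eta\in\mathbb{U}_X$; since $\operatorname{h_{top}}(\pi)$ is independent of the chosen van Hove sequence I may work with a van Hove sequence $(A_n)_{n\in\N}$ of symmetric tiles (Lemma \ref{lem:weisslemmatilingvanhoveexists}). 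For each $n$ pick an $\eta_{A_n}$-separated set $E_n\subseteq X$ of maximal cardinality $s_n:=\operatorname{sep}_X(\eta_{A_n})$ and set $\sigma_n:=\tfrac1{s_n}\sum_{x\in E_n}\delta_x$ and $\mu_n:=\tfrac1{\theta(A_n)}\int_{A_n}\pi^g_*\sigma_n\,d\theta(g)\in\mathcal{M}(X)$. After passing to a subnet along which in addition $\log(s_n)/\theta(A_n)$ tends to its limit superior, I may assume $\mu_n\to\mu$ in $\mathcal{M}(X)$; the defining property of a van Hove sequence forces $\mu$ to be invariant, because $\pi^{g_0}_*\mu_n-\mu_n$ is carried by $(A_n+g_0)\,\Delta\,A_n$ and $\theta\big((A_n+g_0)\Delta A_n\big)=o(\theta(A_n))$ for every fixed $g_0\in G$.

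Next I would translate separation into entropy of a partition. By continuity of $\pi$ and compactness of $X$ there are a symmetric $\eta'\in\mathbb{U}_X$ and $V\in\mathcal{N}(G)$ with $(\pi^h x,\pi^h y)\in\eta$ whenever $(x,y)\in\eta'$ and $h\in V$. Choose a finite partition $\alpha$ of $X$ with $P\times P\subseteq\eta'$ for all $P\in\alpha$, a precompact $V$-dense Delone set $\omega\subseteq G$ of finite local complexity, and refine $\alpha$ (keeping the constraint $P\times P\subseteq\eta'$) so that $\mu(\partial P)=0$ for every $P\in\alpha$. If $x\neq y$ lie in $E_n$, pick $g\in A_n$ with $(\pi^g x,\pi^g y)\notin\eta$ and then $f\in\omega$ with $g\in f+V$; then $f$ lies in a fixed bounded enlargement $A_n'$ of $A_n$ with $\theta(A_n')/\theta(A_n)\to1$, and $\pi^f x,\pi^f y$ must lie in different atoms of $\alpha$ (otherwise $(\pi^g x,\pi^g y)=(\pi^{g-f}\pi^f x,\pi^{g-f}\pi^f y)\in\eta$). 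Hence distinct points of $E_n$ lie in distinct atoms of $\alpha_{A_n'\cap\omega}$, so $\log s_n=H_{\sigma_n}(\alpha_{A_n'\cap\omega})$.

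The heart of the argument, and in my view the main obstacle, is the \emph{blocking step}: to show that for every sufficiently large symmetric tile $D$ (taken from a van Hove sequence of symmetric tiles, so $G=D+\Lambda_D$ with negligible overlaps) one has, for a fixed finite partition $\Theta_n(D)$ built from $\alpha$ and the finitely many $D$-patches of $\omega$ and related to the averaged measures $\mu_n$ through Lemma \ref{lem:measureintegrationproperties},
\[\frac{1}{\theta(A_n)}\,H_{\sigma_n}(\alpha_{A_n'\cap\omega})\ \le\ \frac{1}{\theta(D)}\,H_{\mu_n}\!\big(\Theta_n(D)\big)\ +\ \varepsilon_n(D),\qquad \varepsilon_n(D)\xrightarrow[n\to\infty]{}0.\]
Classically one tiles $A_n'$ by translates of a fundamental domain of a uniform lattice, averages over the finitely many shifts, reindexes the resulting double sum \emph{by the lattice}, and applies concavity of $H_\bullet$ (here exactly Lemma \ref{lem:measureintegrationproperties}). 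The difficulty is that groups such as $\mathbb{Q}_2$ contain no uniform lattice, so no tiling has a subgroup as index set and the reindexing has no literal analogue; this is precisely where $\sigma$-compactness enters, via Lemma \ref{lem:weisslemmatilingvanhoveexists}. The plan is: tile $A_n'$ by the translates $D+\lambda$, $\lambda\in\Lambda_D$, average the resulting subadditive estimate over shifts ranging over $D$ itself, and then control (i) the boundary pieces of the tiling via the van Hove property; (ii) the covering multiplicity produced by the change of variables, which is non-constant because $\Lambda_D$ is not a subgroup, by using the symmetry $D=-D$ together with the van Hove property to show it is asymptotically $1$ after integration; and (iii) the fact that, lacking periodicity, the pattern $(\omega-v)\cap D$ varies with the translate: here finite local complexity is used, but the translates must be grouped according to their patch class (of which there are finitely many) so that the estimate stays at the level of a patch-frequency-weighted average rather than a bare sum over tiles. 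This last point is essential, since the sum over tiles would blow up exponentially whenever the patch counting entropy of $\omega$ is positive --- which, by Theorem \ref{the:intro:Q2example}, can occur in $\mathbb{Q}_2$. Making this bookkeeping simultaneously lossless enough and compatible with Lemma \ref{lem:measureintegrationproperties} is the crux.

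Granting the blocking step, the proof finishes as in \cite{misiurewicz1976short}. Letting $n\to\infty$ along the subnet and using $\mu_n\to\mu$ together with $\mu(\partial P)=0$ for $P\in\alpha$ --- so that $\nu\mapsto H_\nu(\beta)$ is continuous at $\mu$ for the finitely many partitions $\beta$ involved --- gives $\limsup_n\log(s_n)/\theta(A_n)\le\theta(D)^{-1}H_\mu(\beta_D)$ for a fixed finite partition $\beta_D$ attached to $D$ (a refinement of $\alpha$ along $\omega\cap D$, up to the patch-class bookkeeping above). Letting $D$ exhaust $G$ along the van Hove sequence of symmetric tiles, the right-hand side tends to a quantity which, by the definition of $h_\mu(\pi)$ evaluated with the Delone set $\omega$ and a suitable partition, is at most $h_\mu(\pi)$. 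As $\eta\in\mathbb{U}_X$ was arbitrary, this yields $\operatorname{h_{top}}(\pi)\le\sup_\mu h_\mu(\pi)$ and completes the proof.
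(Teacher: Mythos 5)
Your overall architecture matches the paper's: quote \cite{hauser2022entropy} for $\sup_\mu h_\mu(\pi)\leq h_{top}(\pi)$, then run a Misiurewicz-style argument with maximal $\eta_{A_n}$-separated sets $E_n$, the uniform measures $\sigma_n$, the averaged measures $\mu_n$, a Krylov--Bogolyubov limit $\mu$, a boundary-null partition $\alpha$ subordinate to $\eta$, and a van Hove sequence of symmetric tiles from Lemma \ref{lem:weisslemmatilingvanhoveexists}. However, the proof is not complete: the ``blocking step'' that you yourself single out as the crux is exactly the nontrivial content of the theorem in this setting, and you leave it as a plan rather than an argument. Worse, the plan you sketch heads in a direction that is both unnecessary and doubtful: you propose to group the translates $D+\lambda$ according to the patch class of $(\omega-\lambda)\cap D$ and to keep the estimate ``at the level of a patch-frequency-weighted average,'' fearing that otherwise the sum over tiles blows up when $\omega$ has positive patch counting entropy. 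This fear is based on a misreading of where the Delone set enters: $\omega\subseteq G$ is only an index set for refinements in the definition of $h_\mu(\pi)$, the action $\pi$ on $X$ is arbitrary, and nothing in the estimate ever sees a patch of $\omega$; moreover a frequency-weighted bookkeeping would require frequencies that need not exist and has no evident interface with Lemma \ref{lem:measureintegrationproperties}, which is formulated for a single fixed partition.

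The paper closes the step you leave open without any patch bookkeeping, by three devices you do not use. First, the refinement index is the \emph{fixed} finite set $F_m:=(K+A_m)\cap\omega$, and translated refinements are converted into translated \emph{measures} via the identity $H_{\sigma_n}(\alpha_{F_m+g+a})=H_{\pi_*^{g+a}\sigma_n}(\alpha_{F_m})$; subadditivity then gives $\log\operatorname{sep}_X(\eta_{A_n})=H_{\sigma_n}(\alpha_{F_m+\hat C+a})\leq\sum_{g}H_{\pi_*^{g+a}\sigma_n}(\alpha_{F_m})$, where each summand is bounded by $\log|\alpha_{F_m}|$, so no exponential growth can occur regardless of the patch complexity of $\omega$. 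Second, the shift $a$ is averaged \emph{continuously} over the tile $A_m$ (not over a discrete set of shifts), and because $A_m+\xi$ tiles $G$ with null overlaps the change of variables $\int_{A_m}\sum_{g\in\hat C}H_{\pi_*^{g+a}\sigma_n}(\alpha_{F_m})\,d\theta(a)=\int_{A_m+\hat C}H_{\pi_*^{g}\sigma_n}(\alpha_{F_m})\,d\theta(g)$ is exact; there is no ``covering multiplicity'' to control, and the symmetry of the tiles is used only to trap the defect sets $\hat C\setminus\check C$ inside $\partial_{(2\odot A_m)}A_n$ and $\partial_{(3\odot A_m)}A_n$, which the van Hove property kills after dividing by $\theta(A_n)$. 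Third, Lemma \ref{lem:measureintegrationproperties} (concavity of $\nu\mapsto H_\nu(\alpha_{F_m})$ under averaging, applicable because $\sigma_n$ is finitely supported and $\mu_n(\partial A)=0$ for $A\in\alpha$) converts $\tfrac1{\theta(A_n)}\int_{A_n}H_{\pi^g_*\sigma_n}(\alpha_{F_m})\,d\theta(g)$ into $H_{\mu_n}(\alpha_{F_m})$, after which weak* convergence and $\mu(\partial A)=0$ give $h^{\mathcal A}_{sep}(\eta)\leq\limsup_m H_\mu(\alpha_{F_m})/\theta(A_m)\leq h_\mu(\pi)$. Until you carry out a step of this kind, the inequality $h_{top}(\pi)\leq\sup_\mu h_\mu(\pi)$ is not established by your proposal.
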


\begin{remark}
	The techniques presented in this article can be adapted in order to give a proof for the variational principle in the context of topological pressure, as introduced in \cite{hauser2022entropy}. Since we do not need the variational principle in this generality we will only present a version sufficient for our purposes.
\end{remark}

\begin{remark}
	It remains open to give a proof of the full variational principle, whenever $G$ is a unimodular amenable group, or a non-$\sigma$-compact LCA group. 
\end{remark}

\begin{proof}[Proof of Theorem \ref{the:VP}]
	Recall from Proposition \ref{pro:GoodwynPrelims} that for any invariant regular Borel probability measure $\mu$ on $X$ there holds $h_\mu(\pi)\leq h_{top}(\pi)$. 

Consider a van Hove sequence $\mathcal{A}=(A_n)_{n\in \mathbb{N}}$ of symmetric tiles. In order to show the open half of the variational principle it suffices to show that for any $\eta\in \mathbb{U}_X$ there exists a invariant $\mu\in \mathcal{M}(X)$ such that 
\[h_{sep}^\mathcal{A}(\eta)
	:=\limsup_{n\to \infty}\frac{\log(\operatorname{sep}_X(\eta_{A_n}))}{\theta(A_n)}\leq h_\mu(\pi).\]	
 For $n\in \mathbb{N}$ let us consider a (finite) $(\eta_{A_n})$-separated subset $E_n\subseteq X$ of maximal cardinality $\operatorname{sep}_X(\eta_{A_n})$. 
 We define 
 	$\sigma_n:=1/|E_n|\sum_{x\in E_n}\delta_x$
 	%$1/|E_i|\sum_{x\in E_i}\delta_X$ 
 and furthermore 
 	\[\mu_n:=\frac{1}{\theta(A_n)}\int_{A_n}\pi^g_*\sigma_n d\theta(g).\]
 Restricting to a subsequence of $(A_n)_{n\in \mathbb{N}}$ if necessary we assume without lost of generality that $h_{sep}^\mathcal{A}(\eta)
	=\lim_{n\to \infty}{\log(\operatorname{sep}_X(\eta_{A_n}))}/{\theta(A_n)}$ and that $(\mu_n)_{n\in \mathbb{N}}$ converges in $\mathcal{M}(X)$ to some $\mu$.
	With a standard Krylov-Bogolyubov argument it follows that $\mu$ is invariant and it remains to show that $\mu$ satisfies 
$h_{sep}^\mathcal{A}(\eta)\leq h_\mu(\pi)$.

	Let $\omega$ be a Delone set in $G$ that is $K$-dense with respect to some compact and symmetric subset $K\subseteq G$. 
	A standard argument involving Frodas theorem and the characterization of the topology of a compact Hausdorff space by a family of continuous pseudometrics allow to choose a finite partition $\alpha$ that satisfies $\left(1/2\mu+\sum_{n\in \mathbb{N}}1/2^{n+1}\mu_n\right)(\partial A)=0$ and $A^2\subseteq \eta_K$ for all $A\in \alpha$. In particular, such a choice implies $\mu(\partial A)=0=\mu_n(\partial A)$ for all $A\in \alpha$ and all $n\in \mathbb{N}$. 	
%	Furthermore Lemma \ref{lem:choosealpha} allows to choose a finite partition $\alpha$ that satisfies $\nu(\partial A)=0$ and $A^2\subseteq \eta_K$ for all $A\in \alpha$ and all $\nu \in \{\mu\}\cup\{\mu_n;\, n\in \mathbb{N}\}$. 	
	We abbreviate $F_n:=(K+A_n)\cap \omega$, $2\odot A_n:=A_n+A_n$ and  $3\odot A_n:=A_n+A_n+A_n$ for $n\in \mathbb{N}$.

	Consider now $n,m\in \mathbb{N}$.
	As $A_m$ is a tile there is $\xi\subseteq G$ such that $A_m +\xi =G$ and such that $\theta((A_m+g)\Delta (A_m+h))=0$ for all distinct $g,h\in \xi$. 
	We denote $\check{C}$ for the set of all $g\in \xi$ wich satisfy $2\odot A_m+g\subseteq A_n$. We write $\hat{C}$ for the set of all $g\in \xi$ for which $2\odot A_m +g$ and $A_n$ intersect. These sets satisfy 
	$\hat{C}\setminus \check{C}\subseteq \partial_{(2\odot A_m)} A_n$ and we observe $A_m+(\hat{C}\setminus \check{C})\subseteq \partial_{(3\odot A_m)} A_n$. 
	
	For $a\in A_m$ it is straightforward to show that 
	$A_n\subseteq A_m+\hat{C}+a\subseteq K+F_m+\hat{C}+a$. Thus 
	$\alpha_{F_m+\hat{C}+a}$ is at scale $\eta_{A_m}\supseteq \eta_{K+F_m+\hat{C}+a}=(\eta_K)_{F_m+\hat{C}+a}$. 
	Since $E_n$ was chosen to be $\eta_{A_n}$-separated we thus obtain that every element of the partition $\alpha_{F_m+\hat{C}+a}$ contains at most one element of $E_n$. Thus 
	\[\log \operatorname{sep}_X(\eta_{A_n})=\log|E_n|
	=-\frac{1}{|E_n|}\sum_{x\in E_n}\log\left(\frac{1}{|E_n|}\right)
	= H_{\sigma_n}(\alpha_{F_m+\hat{C}+a})
	\leq \sum_{g\in \hat{C}} H_{\pi_*^{g+a}\sigma_n}(\alpha_{F_m}).\]
Note that $\left(\sup_{\nu\in \mathcal{M}(X)}H_\nu(\alpha_{F_m})\right)\leq |\alpha_{F_m}|<\infty$ and recall that $\hat{C}\subseteq \xi$. This allows to integrate over all $a\in A_m$ and to compute
	\begin{align*}
	\theta(A_m)\log(\operatorname{sep}_X(\eta_{A_n}))
	&\leq \int_{A_m} \sum_{g\in \hat{C}} H_{\pi_*^{g+a}\sigma_n}(\alpha_{F_m}) d\theta(a)
	\\
	&= \int_{A_m+\hat{C}} H_{\pi_*^g\sigma_n}(\alpha_{F_m})d\theta(g)
	\\
	&\leq \int_{A_m+\check{C}} H_{\pi_*^g\sigma_n}(\alpha_{F_m})d\theta(g)
	+\int_{\partial_{(3\odot A_m)}A_n} H_{\pi_*^g\sigma_n}(\alpha_{F_m})d\theta(g)
	\\
	&\leq \int_{A_n} H_{\pi_*^g\sigma_n}(\alpha_{F_m})d\theta(g)
	+\int_{\partial_{(3\odot A_m)}A_n} H_{\pi_*^g\sigma_n}(\alpha_{F_m})d\theta(g)
	\\
	&\leq \int_{A_n} H_{\pi_*^g\sigma_n}(\alpha_{F_m})d\theta(g)
	+\theta(\partial_{(3\odot A_m)}A_n)|\alpha_{F_m}|. 
	\end{align*}	
	Since
$\sigma_n$ is finitely supported and $\mu(\bigcup_{A\in \alpha_{F_n}}\partial A)=0$ we obtain from Lemma \ref{lem:measureintegrationproperties} that
\begin{align*}
	\frac{\log(\operatorname{sep}_X(\eta_{A_n}))}{\theta(A_n)}
%	%&\leq \frac{\frac{1}{\theta(A_n)}\int_{A_n}H_{\pi^g_*\sigma_n}(\alpha_{F_m})d\theta(g)}{\theta(A_n)\theta(A_m)}+ 
%	\frac{\mu(\partial_{(3\odot A_m)}A_n)|\alpha_{F_m}|}{\theta(A_m)\theta(A_n)} \\
	\leq \frac{H_{\mu_n}(\alpha_{F_m})}{\theta(A_m)}+ \frac{\theta(\partial_{(3\odot A_m)}A_n)|\alpha_{F_m}|}{\theta(A_m)\theta(A_n)}. 
\end{align*} 
	Thus the van Hove property of $(A_n)_{n\in \mathbb{N}}$ yields 
	\[h_{sep}^\mathcal{A}(\eta)
	=\lim_{n\to \infty}\frac{\log(\operatorname{sep}_X(\eta_{A_n}))}{\theta(A_n)}\leq \limsup_{n \to\infty} \frac{H_{\mu_n}(\alpha_{F_m})}{\theta(A_m)}+ 0.\]
	Since $\mu(\partial A)=0$ for $A\in \alpha$ and $\mu_n\to \mu$ in $\mathcal{M}(X)$ we obtain 
	\[h_{sep}^\mathcal{A}(\eta)
	\leq \limsup_{m\to \infty}\frac{H_{\mu}(\alpha_{F_m})}{\theta(A_m)} 
	\leq h_\mu(\pi). \]
	\end{proof}

\subsection{Pure point diffraction and topological entropy}

	With the variational principle in our context at hand we next follow the arguments of \cite{baake2007pure} in order to give a proof for our first main result. We include these arguments for the convenience of the reader. 
	
	\begin{theorem}\label{the:topentropy0}
		Let $G$ be a $\sigma$-compact LCA group. 
	 Any pure point diffractive FLC Delone set $\omega\subseteq G$ with UPF has $0$ topological entropy. 
	 \end{theorem}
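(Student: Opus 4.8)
The plan is to combine the variational principle (Theorem \ref{the:VP}) with the Halmos--von Neumann theorem (Theorem \ref{the:Halmos-vanNeumann}), following the scheme of \cite[Theorem 3]{baake2007pure}. By Theorem \ref{the:VP} it suffices to show that $h_\mu(\pi_\omega) = 0$ for every invariant regular Borel probability measure $\mu$ on $X_\omega$. Since the measure theoretical entropy does not depend on the choice of Delone set or van Hove sequence, and since $h_\mu$ is affine in $\mu$ (a standard fact, carrying over to this setting), it is enough to treat \emph{ergodic} $\mu$.

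So fix an ergodic invariant measure $\mu$ on $X_\omega$. The key input is that pure point diffraction of $\omega$ forces the dynamical system $(X_\omega, G, \pi_\omega, \mu)$ to have pure point dynamical spectrum. For this I would invoke the geometric characterization of pure point diffraction (Proposition \ref{pro:geometric_characterization_ppd}) together with the equivalence, essentially due to \cite{lenz2020pure} (cited just before that proposition), between pure point diffraction of $\omega$ and pure point dynamical spectrum of the associated hull with its patch-frequency measure; one should note that for an FLC Delone set with uniform patch frequencies the hull $X_\omega$ is uniquely ergodic, so the ergodic measure $\mu$ is exactly this frequency measure and the spectral statement applies. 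Once pure point dynamical spectrum is established, Theorem \ref{the:Halmos-vanNeumann} gives that $(X_\omega,G,\pi_\omega,\mu)$ is measure theoretically conjugated to a rotation on a compact Abelian group.

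It then remains to observe that a rotation has zero measure theoretical entropy, and that measure theoretical entropy is a conjugacy invariant. The latter is immediate from the definition via partitions, since a measure algebra isomorphism intertwining the $\pi^g$ transports finite partitions to finite partitions and preserves $H_\mu$ and the refinement operation $\alpha \mapsto \alpha_{A_n \cap \omega}$. For the former, on a rotation $\pi(g,x) = \rho(g)+x$ with Haar measure, any finite partition $\alpha$ into sets with $\mu(\partial A)=0$ can be approximated; the refinements $\alpha_{A_n\cap\omega}$ are translates of $\alpha$ by the finitely many group elements $\rho(g)$, and because the acting group maps into a compact group by the homomorphism $\rho$, the combinatorial complexity of $\alpha_{A_n\cap\omega}$ grows sub-exponentially relative to $\theta(A_n)$ — indeed one gets $H_\mu(\alpha_{A_n\cap\omega}) = o(\theta(A_n))$. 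This is the point where the argument of \cite{baake2007pure} is used essentially verbatim; alternatively one can cite that rotations are isometric/equicontinuous and have zero topological hence zero measure theoretical entropy by the half of the variational principle already proved. Chaining these facts gives $h_\mu(\pi_\omega)=0$ for all invariant $\mu$, and hence $h_{top}(\pi_\omega)=0$.

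The main obstacle I anticipate is the bookkeeping around the spectral equivalence: one must be careful that the measure $\mu$ appearing in the variational principle (an arbitrary invariant, reduced to ergodic measure) is indeed the one for which the diffraction/dynamical spectrum dictionary applies, which is clean here precisely because unique ergodicity of $X_\omega$ (a consequence of uniform patch frequencies) leaves no choice. A secondary technical point is verifying that the generalized Halmos--von Neumann theorem and the affinity/conjugacy-invariance of $h_\mu$ hold at the stated level of generality for LCA$_\sigma$ acting groups; these are asserted to be "straightforward generalizations" of \cite{walters1982introduction, baake2007pure} and I would simply cite them, spelling out only the rotation-has-zero-entropy step in the detail needed.
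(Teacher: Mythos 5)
Your proposal is correct and follows essentially the same route as the paper: pure point diffraction gives pure point dynamical spectrum of the (uniquely ergodic) hull, the Halmos--von Neumann theorem (Theorem \ref{the:Halmos-vanNeumann}) yields a measure theoretical conjugacy to a rotation, and the variational principle (Theorem \ref{the:VP}) is used twice, exactly as in your ``alternative'' for the rotation step (zero topological entropy of the rotation plus the easy half of the variational principle, then conjugacy invariance of $h_\mu$). The only cosmetic difference is the reference for the spectral step: the paper cites \cite[Theorem 7]{baake2004dynamical} directly rather than assembling it from Proposition \ref{pro:geometric_characterization_ppd} and \cite{lenz2020pure}.
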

	 \begin{proof}
	 	Recall that FLC Delone sets with UPF have a unique invariant (and ergodic) Borel probability measure $\mu$ on $X_\omega$. 
	 	%Denote by $\mu$ the unique invariant (and ergodic) regular Borel probability measure on $X_\omega$. 
	 	From \cite[Theorem 7]{baake2004dynamical} we know that the Delone action of a pure point diffractive FLC Delone set has pure point dynamical spectrum. 
	 	We thus apply the Halmos-von Neumann theorem (Theorem \ref{the:Halmos-vanNeumann}) to $\pi_\omega$ in order to find a measure theoretically conjugated rotation $\phi$ equipped with an invariant ergodic regular Borel probability measure $\nu$. 
	 It is straightforward to show that $\phi$ has $0$ topological entropy and the variational principle (Theorem \ref{the:VP}) implies that $h_\nu(\phi)=0$. 
	 Since measure theoretical entropy is an invariant of measure theoretical conjugation we observe $h_\mu(\pi_\omega)=0$ and another application of the variational principle yields the result. 
	 \end{proof} 
	 
	 \begin{remark}
	 In \cite{baake2015ergodic} the set of visible lattice points $\omega\subseteq\mathbb{Z}^2$ is discussed. Note that this set is not a relatively dense subset of $\mathbb{Z}^2$ and in particular, not Delone. It is presented that this set has pure point diffraction (without having UPF) and positive patch counting entropy. 
	It is furthermore discussed, that this set has non-zero topological entropy. 
	This shows that pure point diffractivness does not necessarily imply $0$ topological entropy for FLC uniformly discrete sets in $\mathbb{Z}^2$ or $\mathbb{R}^2$. 
	 %t remains open, whether pure point diffractivness does imply $0$ topological entropy for FLC uniformly discrete sets with uniform patch frequencies, or for FLC Delone sets that do not have uniform patch frequencies. 
	 \end{remark}
	 
%	 
%	 
%	 
%\subsection{Pure point diffraction and patch counting entropy - the compactly generated case}
%	
%	
%	
%	
%	\begin{lemma}
%		Let $G$ be a LCA$_\sigma$ group such that the topology is generated by a metric for which the closed centred balls $B_n$ form a van Hove sequence. Then $(B_n)_{n\in \mathbb{N}}$ is compactly connected to $0$. 
%	\end{lemma}
%	\begin{proof}
%		There exists $k\in \mathbb{N}$ such that any element of $G$ can be written as a finite sum of elements of $B_k$. 
%	If $n\leq k$, then $B_n$ is trivially $B_k$-connected to $0$. 
%	If $n>k$, for $g\in B_n$ there exist $g_i,\dots,g_l\in B_k$ such that $g=g_1+\dots+g_l$. Let $a_i:=g_1+\dots +g_i$ for $i=0,\dots,l$. Then
%	$a_0=0,a_n=g$ and $a_i-a_{i-1}=g_i\in B_k$ for $i\in\{1,\dots,l\}$
%	
%	Since $B_k$ is compact we conclude that ... 
%	\end{proof}
%	 
%

%%%%%%%%%%%%%%%%%%%%%%%%%%%%%%
\section{Pure point diffraction and patch counting entropy}
%%%%%%%%%%%%%%%%%%%%%%%%%%%%%%
%%
%%\subsection{Some preparations, not for the article, only to check}
%%
%%	Let $f_1:=f_2:=1$ and define $f_n:=f_{n-1}+f_{n-2}$ the \emph{Fibonacci sequence.}
%%	\begin{lemma}
%%		We have $f_n/2^n\to 0$. 
%%	\end{lemma}
%%	\begin{proof}
%%		Observe $f_3/2^3=1/4 \leq 3/8=(3/4) (f_2/2^2)$ and inductively 
%%		\[\frac{f_{n+1}}{2^{n+1}}
%%		=\frac{f_n+f_{n-1}}{2\cdot 2^n}		
%%		= \frac{1}{2}\left(\frac{f_n}{2^n}+\frac{1}{2}\frac{f_{n-1}}{2^{n-1}}\right)
%%		\leq \frac{1}{2}\frac{3}{4}\left(\frac{f_{n-1}}{2^{n-1}}+\frac{f_{n-2}}{2^{n-1}}\right)
%%		=\frac{3}{4}\frac{f_n}{2^n}.\]
%%	\end{proof}
%%	
%%	For $a,b\in \mathbb{N}$ let $d_1=a$, $d_2=b$ and $d_n:=d_{n-1}+d_{n-2}$. 
%%	\begin{lemma}
%%		We have $d_n/2^n\to 0$. 
%%	\end{lemma}
%%	\begin{proof}
%%	Let $m\in \mathbb{N}$	such that $f_n\geq a,b$ and observe 
%%	\[\frac{d_n}{2^n}\leq2^{m+2} \frac{f_{n+m+2}}{2^{n+m+2}}\to 0.\]
%%	\end{proof}
%%	
%%	\begin{lemma}
%%		For $r,s\in [0,\infty]$ with $s\leq r$ there exists a sequence $(d_n)_{n\in \mathbb{N}}$ such that $d_n\geq d_{n-1}+d_{n-2}$,  $\limsup d_n/2^n=r$ and 
%%		$\liminf d_n/2^n=s$.
%%	\end{lemma}
%%	\begin{proof}
%%		Follows from the previous lemma as the condition $d_n\geq d_{n-1}+d_{n-2}$ allows to lower the value of $d_n/2^n$ after it has be raised to $r$ again to $s$. 
%%	\end{proof}

	In this section we will construct several examples of FLC Delone sets in $\mathbb{Q}_2$ with UPF and pure point diffraction, which have non-zero patch counting entropy. 
	Consider a subset $A\subseteq \mathbb{Q}_2$ and $n\in \mathbb{N}$. 
A subset $F\subseteq \mathbb{Q}\cap A$ is called \emph{well placed in $A$}, whenever $F$ contains exactly one element from each ball of radius $1$ contained in $A$.	
	Recall that we write $B_n$ for the closed centred ball in $\mathbb{Q}_2$. We abbreviate $A_n:=B_{2^n}$, $V_n:=B_{2^{-n}}$, 
	\[
	\xi_{n}^k:=\{m/2^{n};\, m\in \mathbb{N}_0, m\leq 2^{n-k}-1\}
	\]
	for $k,n\in \mathbb{N}_0$ with $k\leq n$. 
	For $k\leq n$ we observe
	$A_n=\xi_n^k \oplus A_k$. % and $(\xi_{n}^k-\xi_n^k)\cap A_k=\{0\}$.  
	We write %$\xi:=\bigcup_{n\in \mathbb{N}_0}\xi_n$ and 
	$\xi^k:=\bigcup_{n\in \mathbb{N}}\xi_n^k$ for $k\in \mathbb{N}_0$. 
	Note that $\xi_n^k\subseteq \xi^k\subseteq \xi^0\subseteq \mathbb{Z}[1/2]$ and $\xi_n^k+\xi_k^l=\xi_n^l$ hold for $l,k,n\in \mathbb{N}_0$ with $l\leq k\leq n$.

	\begin{lemma}\label{lem:ppdsets}
		For $k\in \mathbb{N}$ and a finite set $F\subseteq \mathbb{Z}[1/2]$ the set $\xi^k+F$ is a pure point diffractive FLC Delone set in $\mathbb{Q}_2$ with UPF. 
	\end{lemma}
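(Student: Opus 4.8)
The plan is to exhibit $\xi^k + F$ as a regular model set — or more precisely, as a finite union of translates of a single regular model set — and then invoke Proposition \ref{pro:regularmodelsetsppd} together with the fact that the relevant classes of Delone sets are stable under the operations performed. First I would treat the basic building block $\xi^k$ itself. Recall $\xi^k = \bigcup_{n\in\mathbb N} \xi^k_n$ with $\xi^k_n = 2^{-k}\xi_{n-k}$ and $\xi_m = \{j/2^m : 0\le j\le 2^m-1\}$; since $\xi_m \subseteq \xi_{m+1}$ one sees that $\xi^k = 2^{-k}\mathbb Z_{\ge 0}[1/2]$-type set, and in fact $\xi^k = \{\,x\in\mathbb Z[1/2] : |x|_2 \le 2^k \text{ and } x \ge -\text{(something)}\,\}$; the cleanest description is via the cut and project scheme $(\mathbb R,\mathbb Q_2,\Lambda)$ with $\Lambda = \{(g,g) : g\in\mathbb Z[1/2]\}$ from the Example in the excerpt. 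Indeed, under the diagonal embedding of $\mathbb Z[1/2]$, the condition $g\in\xi^k$ translates (after checking the exact range of indices) to: the $\mathbb R$-component lies in a half-line and the $\mathbb Q_2$-component lies in the ball $B_{2^k}\subseteq\mathbb Q_2$. To get a genuine \emph{bounded} window I would instead use the scheme $(\mathbb Q_2,\mathbb R,\{(g,g):g\in\mathbb Z[1/2]\})$, in which $\xi^k = \pi_{\mathbb Q_2}(\Lambda\cap(\mathbb Q_2\times W))$ for a window $W\subseteq\mathbb R$ that is a bounded interval (capturing the range $0 \le m/2^{n}$ type constraints), intersected appropriately — here $2^{-k}\xi_{n-k}$ ranges over $\{0, 2^{-k}\cdot 2^{-(n-k)},\dots\}$, so the $\mathbb R$-side values are bounded, say lying in $[0,2^k)$ or a similar interval. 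The key point is that $\xi^k$ is exactly the set of $\mathbb Z[1/2]$-elements whose real value lies in a fixed half-open interval $W$ with $\theta_{\mathbb R}(\partial W)=0$ and with $W$ precompact, so $\xi^k = \Lambda(W)$ is a regular model set, hence by Proposition \ref{pro:regularmodelsetsppd} it is a pure point diffractive FLC Delone set with uniform patch frequencies.

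Next I would handle the translation by a finite set $F\subseteq\mathbb Z[1/2]$. Writing $\xi^k + F = \bigcup_{f\in F}(\xi^k + f)$, each $\xi^k+f$ is again a regular model set (translates of model sets are model sets, by translating the lattice $\Lambda$ by $(f,f)$, or equivalently shifting the window), so each summand is pure point diffractive, FLC, Delone with uniform patch frequencies. What remains is to argue that the finite union inherits all four properties. That the union of finitely many Delone sets is Delone: relative density is immediate (a superset of a relatively dense set in a way that stays uniformly discrete requires care), and uniform discreteness of the union needs a separate argument — this is where I expect the main subtlety. One should check that $\xi^k+F$ is still uniformly discrete: since all points lie in $\mathbb Z[1/2]$ and within bounded $2$-adic distance, and $F$ is finite, the minimal gap (in $|\cdot|_2$) between distinct points of $\xi^k+F$ is bounded below; concretely, all elements of $\xi^k$ lie in $2^{-k}\mathbb Z_2$ type cosets, and adding finitely many dyadic rationals keeps everything in finitely many cosets of some $2^N\mathbb Z_2$, forcing a positive minimal separation. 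FLC of a finite union is clear since $\operatorname{Pat}_{\omega_1\cup\omega_2}(A)$ injects into a product of patch sets of the pieces up to bounded ambiguity — more carefully, FLC is preserved because an $A$-patch of the union is determined by the tuple of $(A')$-patches of each piece for a slightly enlarged $A'$. Uniform patch frequencies and pure point diffraction of a finite union: here I would pass to the dynamical side — the hull $X_{\xi^k+F}$ is a factor of a product of the hulls of the translated model sets (each of which has pure point dynamical spectrum by \cite{baake2004dynamical, Schlottmann}), a product of pure point spectrum systems has pure point spectrum, a factor of a pure point spectrum system has pure point spectrum, and unique ergodicity is likewise preserved, giving uniform patch frequencies via Schlottmann's equivalence; then Proposition \ref{pro:geometric_characterization_ppd} (or directly the equivalence between pure point diffraction and pure point dynamical spectrum for such systems) yields pure point diffractiveness of $\xi^k+F$.

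An alternative, possibly cleaner route that avoids the finite-union bookkeeping: observe directly that $\xi^k + F$ is \emph{itself} a regular model set in a single cut and project scheme. Since $F\subseteq\mathbb Z[1/2]$ is finite, choose $N$ large enough that $2^N F\subseteq\mathbb Z$ and $2^{-N}$ is finer than the scale defining $\xi^k$; then $\xi^k+F$ should be expressible as $\pi_{\mathbb Q_2}(\Lambda\cap(\mathbb Q_2\times W'))$ for a window $W'\subseteq\mathbb R$ that is a finite union of intervals (one interval-shift per element of $F$), and a finite union of intervals still has boundary of measure zero and compact closure with nonempty interior — so $\xi^k+F$ is a regular model set and Proposition \ref{pro:regularmodelsetsppd} applies in one stroke. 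I would present this as the main line of argument, since it is shortest; the only thing to verify carefully is the explicit identification of $\xi^k$ (and then $\xi^k+F$) with $\Lambda(W)$ for the stated scheme, i.e.\ pinning down $W$ and checking $\theta_{\mathbb R}(\partial W)=0$. \textbf{The main obstacle} is precisely this explicit window computation: translating the inductive/union description $\xi^k=\bigcup_n 2^{-k}\xi_{n-k}$ into the statement ``$g\in\xi^k \iff g\in\mathbb Z[1/2]$ and the real value of $g$ lies in $W$'' requires correctly identifying $W$ as a bounded set and confirming density of $\pi_{\mathbb R}(\Lambda)=\mathbb Z[1/2]$ in $\mathbb R$ (which holds) and injectivity of $\pi_{\mathbb Q_2}|_\Lambda$ (also clear since $\mathbb Z[1/2]\hookrightarrow\mathbb Q_2$ is injective) — all routine once the window is correctly written down, but the window identification itself is the crux.
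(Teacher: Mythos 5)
Your recommended main line---exhibiting $\xi^k+F$ directly as a regular model set for the cut and project scheme $(\mathbb{Q}_2,\mathbb{R},\{(g,g);\, g\in \mathbb{Z}[1/2]\})$ with a window that is a finite union of half-open intervals and then citing Proposition \ref{pro:regularmodelsetsppd}---is exactly the paper's proof, and the window identification you flag as the crux is a one-line computation: with the paper's normalization $\xi_n^k=2^{-k}\xi_{n-k}$ one gets $\xi^k=\mathbb{Z}[1/2]\cap[0,2^{-k})$ (real values), so the window is $[0,2^{-k})+F$. The finite-union bookkeeping in your first two paragraphs is therefore unnecessary.
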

	\begin{proof}
	Consider the cut and project scheme $(\mathbb{Q}_2,\mathbb{R},\{(g,g);\, g\in \mathbb{Z}[1/2]\})$. The considered set $\xi^k+F$ is a regular model set with respect to the window $[0,2^{-k})+F$ and the statement follows from Proposition \ref{pro:regularmodelsetsppd}. 
	\end{proof}

	%Note that $\xi$ is well placed in $\mathbb{Q}_2$ and that $\xi^k$ is well placed in $\mathbb{Q}_2$ with gaps at scale $2^k$. 

%	For $n\in \mathbb{N}_0$ we have $\xi_n^n=\xi_0=\{0\}$. 
%	Furthermore, $\xi_n^k+\xi_k^l=\xi_n^l$ and $\xi^l_k\subseteq \xi^l_k$ hold for $l,k,n\in \mathbb{N}_0$ with $l\leq k\leq n$.	
%%%	
%%%	\begin{remark}
%%%		\begin{align*}
%%%			\xi_0&=\{0\},\\
%%%			\xi_1&=\{0,1/2\},\\
%%%			\xi_2&=\{0,1/4,1/2,3/4\},\\
%%%			\xi_3&=\{0,1/8,\ldots,7/8\},\\
%%%			\xi_n^0&=\xi_n,\\
%%%			\xi_n^n&=\xi_0=\{0\},\\
%%%			\xi_{n+1}^n&=\{0,1/2^{n+1}\},\\
%%%			\xi_n^l&=\xi_n^k \oplus \xi_k^l ~~~(l\leq k\leq n)\\
%%%			A_n&=\xi_n^k \oplus A_k ~~~(k\leq n,\xi_n^k\text{well placed in } A_n \text{ at scale }2^k)
%%%		\end{align*}
%%%	\end{remark}
%%%	

	In order to show that the Delone sets we construct below have uniform patch frequencies the following notion will become useful.
	For $N\in \mathbb{N}$ a van Hove sequence $(C_n)_{n\in \mathbb{N}}$ in $G$ is said to be \emph{$N$-sorted}, whenever for all $n\in \mathbb{N}$ there holds $C_n=C_n+A_N$, i.e.\ whenever $C_n$ consists of finitely many translates of $A_N$.  

\begin{lemma}\label{lem:upfalongsortetsequences}
	Consider a Delone subset $\omega\subseteq  \mathbb{Q}_2$, 
	$A\subseteq G$ compact and a non-empty $A$-patch $P$ of $\omega$. 
	Then $P$ has a uniform frequency, whenever there exists $N\in \mathbb{N}$ such that $P$ has a frequency along any $N$-sorted van Hove sequence. 
\end{lemma}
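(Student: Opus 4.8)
The goal is to upgrade "has a frequency along every $N$-sorted van Hove sequence" to "has a frequency along every van Hove sequence" (which by definition of uniform frequency is all that is needed). The plan is to take an arbitrary van Hove sequence $(C_n)_{n\in\mathbb{N}}$ in $\mathbb{Q}_2$ and replace it by an $N$-sorted one $(C_n')_{n\in\mathbb{N}}$ that is close enough to $(C_n)$ that the counting ratios $|\{g\in C_n;\,(\omega-g)\cap A=P\}|/\theta(C_n)$ and $|\{g\in C_n';\,(\omega-g)\cap A=P\}|/\theta(C_n')$ have the same limiting behaviour. Concretely, using that $A_N=B_{2^N}$ is a compact open subgroup of $\mathbb{Q}_2$, the cosets of $A_N$ partition $\mathbb{Q}_2$; I would define $C_n'$ as the union of all $A_N$-cosets that meet $C_n$ (equivalently $C_n' = C_n + A_N$ after collapsing, since $A_N+A_N=A_N$). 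Then $C_n'\supseteq C_n$, $C_n'$ is $N$-sorted by construction, and $C_n'\setminus C_n\subseteq \partial_{A_N}C_n$, so by the van Hove property of $(C_n)$ with the compact set $K=A_N$ we get $\theta(C_n'\setminus C_n)/\theta(C_n)\to 0$; in particular $\theta(C_n')/\theta(C_n)\to 1$ and $(C_n')$ is itself van Hove (its $K$-boundaries differ from those of $C_n$ by sets controlled by $\partial_{A_N+K}C_n$).

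Next I would check that $(C_n')$ really is a van Hove sequence and is $N$-sorted, so that by hypothesis $P$ has a frequency along it, say the limit is $d$. It then remains to compare the two counting functions. Writing $c(B):=|\{g\in B;\,(\omega-g)\cap A=P\}|$ for a precompact set $B$, we have $c(C_n)\le c(C_n')\le c(C_n)+c(C_n'\setminus C_n)$, and since $\omega$ is uniformly discrete, $c(C_n'\setminus C_n)\le |\omega\cap (C_n'\setminus C_n)|\le c'\,\theta(C_n'\setminus C_n)$ for a constant $c'$ depending only on the packing constant of $\omega$ (each point of $\omega$ occupies disjoint translates of a fixed neighbourhood of $0$). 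Hence $c(C_n')-c(C_n)=o(\theta(C_n))$, and dividing by $\theta(C_n)$ and using $\theta(C_n')/\theta(C_n)\to1$ shows $c(C_n)/\theta(C_n)\to d$ as well. Since the $N$-sorted limit $d$ is the same for every $N$-sorted van Hove sequence by hypothesis (and in particular independent of the chosen $(C_n')$), the limit along $(C_n)$ exists and equals $d$, so $P$ has a uniform frequency.

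The main obstacle — really the only nontrivial point — is the bound $c(C_n'\setminus C_n)\le c'\theta(C_n'\setminus C_n)$, i.e.\ controlling the number of $\omega$-points in the "collar" $C_n'\setminus C_n$ by its Haar measure; this is where uniform discreteness of $\omega$ enters, via the standard argument that if $\omega$ is $U$-discrete for a precompact open neighbourhood $U$ of $0$ then $|\omega\cap B|\le \theta(B+U)/\theta(U)$, combined with the van Hove estimate to absorb the extra $+U$. Everything else is bookkeeping with the compact open subgroup structure of $\mathbb{Q}_2$ and the definition of $\partial_K$.
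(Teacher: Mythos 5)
Your proposal is correct and takes essentially the same route as the paper: the paper shrinks the given sequence inward to the $N$-sorted core $\{g\in D_n;\, A_N+g\subseteq D_n\}$ while you enlarge outward to $C_n+A_N$, but in both cases the discrepancy is an $\partial_{A_N}$-collar whose contribution to the patch count is killed by uniform discreteness plus the van Hove property, and the frequency is transferred from the sorted sequence to the original one. One cosmetic point: the set $\{g;\,(\omega-g)\cap A=P\}$ is contained in $\omega-p$ for $p\in P$ rather than in $\omega$ itself, but $\omega-p$ is uniformly discrete with the same constants, so your counting bound goes through unchanged (this is exactly how the paper phrases it).
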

\begin{proof}
	Assume that there exists $N\in \mathbb{N}$ such that $P$ has a frequency along any $N$-sorted van Hove sequence. 
	Let $(C_n)_{n\in \mathbb{N}}$ be a $N$-sorted van Hove sequence and note that $(C_n+s_n)_{n\in \mathbb{N}}$ is $N$-sorted for any sequence $(s_n)_{n\in \mathbb{N}}$ in $\mathbb{Q}_2$. Thus $P$ has a uniform frequency along $(C_n+s_n)_{n\in \mathbb{N}}$ for all sequences $(s_n)_{n\in \mathbb{N}}$ and the statement follows from Remark \ref{rem:frequencyGeplapper}.
\end{proof}

%\begin{proof}
%	Consider a general van Hove sequence $(D_n)_{n\in \mathbb{N}}$, a compact subset $A$ and an $A$-patch $P$ of $\omega$. 
%	Let $C_n:=\{g\in D_n;\, A_N+g\subseteq D_n\}$. 
%	A straightforward argument \cite[Proposition 2.2]{hauser2020Anote} then shows that 
%	$(C_n)_{n\in \mathbb{N}}$ is a van Hove net which satisfies $\lim_{n\in \mathbb{N}}\theta(D_n)/\theta(C_n)=1$. Exploring the group structure of $A_N$ we obtain 
%	$(C_n)_{n\in \mathbb{N}}$ to be $N$-sorted. 
%
%	Now consider a precompact open neighbourhood $V$ of $0$ such that $\omega$ is $V$-discrete and $p\in P$. Then also $\omega-p$ is $V$-discrete and hence 
%	$|(\omega-p)\cap (\partial_{A_N} D_n)|\leq \theta(V\partial_{A_N} D_n)/\theta(V)\leq \theta(\partial_{\overline{V}A_N}D_n)/\theta(V)$. Thus the van Hove property of $(D_n)_{n\in \mathbb{N}}$ implies
%	$\lim_{n\to \infty}|(\omega-p)\cap (\partial_{A_N} D_n)|/\theta(D_n)=0$. 
%	Note that there holds $D_n\subseteq C_n\cup \partial_{A_N} D_n$ for all $n\in \mathbb{N}$. 
%	A straightforward argument thus yields
%	\[\{g\in D_n;\, (\omega-g)\cap A=P\}
%	\subseteq ((\omega-p)\cap (\partial_{A_N} D_n))\cup \{g\in C_n;\, (\omega-g)\cap A=P\}\]
%	and we obtain the existence of the frequency of $P$ along $(D_n)_{n\in \mathbb{N}}$
%	from the existence of the frequency of $P$ along $(C_n)_{n\in \mathbb{N}}$. 	
%\end{proof}

\begin{lemma}\label{lem:constructing_fancy_sets}
	Let $n,a,d\in \mathbb{N}$ with $n+2\leq a$. 
	For a set $F\subseteq \mathbb{Z}[1/2]\cap A_a$ that is well placed in $A_{a}$ and that contains $0$ there exists a set $E\subseteq \mathbb{Z}[1/2]$ such that
	\begin{itemize}
	\item[(a)] $F\subseteq E\cap A_{a+d}$ and $E$ is well placed in $A_{a+d}$. 
	\item[(b)] For a compact subset $A\subseteq A_n$, an $A$-patch $P$ of $E$, $k\leq a$, $h\in \xi_a^0$ and $h'\in \xi^a_{a+d}$ we have 
	\[
	|\{g\in (A_{k}+h);\, (F-g)\cap A=P\}|
	=|\{g\in (A_{k}+h+h');\, (E-g)\cap A=P \}|,\]
	in particular we have $\operatorname{Pat}_F(A)=\operatorname{Pat}_E(A)$. 
	\item[(c)] $E\Delta_{V_n} (F+\xi^{a}_{a+d})=\emptyset$.
	\item[(d)] $2^{d}\leq |\operatorname{Pat}_{E}(A_{m})|\leq |\operatorname{Pat}_{F}(A_{m})|+2^{d+m}$ for $m\in \{n+1,n+2\}$.
	%\item[(e)] 
	%$2^{d}
	%\leq |\operatorname{Pat}_{E}(A_{n+2})|
	%\leq |\operatorname{Pat}_{F}(A_{n+2})|+2^{d+n+2}$.
\end{itemize}				
\end{lemma}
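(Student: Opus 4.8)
The plan is to build $E$ by taking the given well-placed set $F\subseteq A_a$ and "copying" it into the $2^d$ many translates of $A_a$ that tile $A_{a+d}$, keeping the original copy fixed and choosing the $2^d-1$ other copies cleverly so as to inflate the patch count by exactly the factor $2^d$. Concretely, recall $A_{a+d}=\xi^a_{a+d}\oplus A_a$, so the balls of radius $1$ inside $A_{a+d}$ are grouped into $2^d$ blocks indexed by $h'\in\xi^a_{a+d}$, each block a translate by $h'$ of the balls inside $A_a$. For each $h'$ I would pick a set $F_{h'}\subseteq\mathbb{Z}[1/2]$ that is well placed in $A_a$ (so that $F_{h'}+h'$ is well placed in $A_a+h'$), and set $E:=\bigcup_{h'\in\xi^a_{a+d}}(F_{h'}+h')$. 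Taking $F_0:=F$ (legitimate since $0\in\xi^a_{a+d}$ and $0\in F$) gives $F\subseteq E$ and, since the blocks partition the unit balls of $A_{a+d}$, part (a) is immediate. For (c), each $F_{h'}$ is well placed in $A_a$ and $F$ is too, so $F_{h'}$ and $F$ differ only by which representative is chosen in each radius-$1$ ball; since $n+2\le a$, two such choices in the same ball of radius $\ge 2^{n+2}$ lie within $V_n=B_{2^{-n}}$ of each other, hence $F_{h'}+h'$ and $F+h'$ agree up to $V_n$, giving $E\,\Delta_{V_n}\,(F+\xi^a_{a+d})=\emptyset$.

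For part (b), the key point is a \emph{locality} observation: for $A\subseteq A_n$ and $g$ lying in the block $A_k+h+h'$ with $k\le a$, the patch $(E-g)\cap A$ only "sees" points of $E$ within $A+g\subseteq A_n+A_k+h+h'$, and since $n<a$ this neighbourhood, modulo the discreteness scale, stays inside the single block $A_a+h'$ (using $n+2\le a$ so that $A_n+A_k\subseteq A_{a-1}\subsetneq A_a$, after possibly enlarging by the discreteness neighbourhood — here one must be slightly careful, see below). Inside that block $E$ coincides with $F_{h'}+h'$, and $F_{h'}$ is "the same as" $F$ up to the within-a-ball ambiguity which is invisible at scale $V_n\supseteq$ (distance between representatives). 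Translating back by $h'$ reduces the count over $A_k+h+h'$ for $E$ to the count over $A_k+h$ for $F$. The in-particular statement $\operatorname{Pat}_F(A)=\operatorname{Pat}_E(A)$ then follows by summing over $h\in\xi_a$ for the single block $h'=0$ (which recovers all of $A_a$, hence all patches of $F$ since $F$ is well placed in $A_a$) versus summing over all $h,h'$.

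Part (d) is where the copies $F_{h'}$ must be chosen with care and is the main obstacle. The lower bound $2^d\le|\operatorname{Pat}_E(A_m)|$ for $m\in\{n+1,n+2\}$ requires that the $2^d$ blocks produce pairwise distinct $A_m$-patches when viewed from a suitable base point: I would reserve, inside each $A_a$-copy, a small "signature region" near the boundary (say within $A_{n+2}\subseteq A_a$ of some fixed far-away ball, possible since $n+2\le a$ leaves room) and choose the representatives in the unit balls of that region so that the resulting local configurations are all different — since within one unit ball one has at least two admissible representatives of $\mathbb{Z}[1/2]$ that are $V_n$-far apart (e.g. $0$ and $2^{-(n+1)}$ relative to the ball centre), and there are at least $n+2$ such balls available, one gets at least $2^{n+2}\ge 2^d$ distinct signatures once $d\le n+2$; for larger $d$ one simply uses more balls, which are available because each $A_a$-copy contains $2^{a}\ge 2^{n+2}$ unit balls and we may take $a$ as large as needed relative to $d$ — wait, $a$ is given, so instead one spreads the signature across blocks, using that the block index $h'$ itself is already part of the data seen by a patch straddling two blocks. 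The upper bound $|\operatorname{Pat}_E(A_m)|\le|\operatorname{Pat}_F(A_m)|+2^{d+m}$ comes from: any $A_m$-patch of $E$ at a base point $g$ either sits well inside one block — contributing a patch of $F_{h'}$, hence (by (b)-type reasoning) a patch of $F$ — or straddles a block boundary, and the number of base points (mod the period lattice of $E$, which is $\xi^a_{a+d}$-periodic up to $V_n$) that straddle a boundary within $A_m$ is bounded by (number of blocks)$\times$(number of boundary-adjacent positions) $\le 2^d\cdot 2^m$. Assembling these counts, together with the bookkeeping that $\operatorname{Pat}_F(A_m)\ne\emptyset$ and that well-placedness makes the $h'=0$ block already realise all of $\operatorname{Pat}_F(A_m)$, yields (d). The delicate part throughout is tracking the interaction between the discreteness neighbourhood of the Delone set $\xi^k+F$ and the scales $A_n\subsetneq A_a$, and making the "signature" choices explicit enough that the lower bound $2^d$ is genuinely attained rather than merely plausible.
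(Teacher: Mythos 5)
Your construction (independent well-placed copies $F_{h'}$ per block) cannot satisfy (b), (c) and the lower bound of (d) simultaneously, and the justifications you give for (b) and (c) contain genuine errors. For (c): two admissible representatives in the same unit ball are only within $2$-adic distance $1$ of each other, not within $V_n=B_{2^{-n}}$ --- your claim that points of a ball of radius $\geq 2^{n+2}$ are $V_n$-close is backwards. In fact, since $V_n\subseteq B_1$ and $F+\xi^a_{a+d}$ has exactly one point per unit ball, condition (c) \emph{forces} every point of $E$ to lie within $V_n$ of the corresponding point of $F+\xi^a_{a+d}$; your later ``signature'' choices (representatives differing by $2^{-(n+1)}$, which has $2$-adic size $2^{n+1}$ and so does not even stay in the unit ball, or representatives chosen ``$V_n$-far apart'') destroy (c) and even well-placedness. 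For (b): patch counting is exact equality of sets, not equality up to $V_n$, so re-choosing representatives arbitrarily changes the $A$-patches for $A\subseteq A_n$ and the counting identity fails. This leaves an unresolved tension at the core of your plan: shifting a whole block by one exact translation preserves (b), but then every block produces the same recentred patches and the lower bound $2^d\leq|\operatorname{Pat}_E(A_m)|$ fails; perturbing points independently can produce distinct patches but breaks (b) and (c).

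The missing idea is the dual-metric trick the paper uses. One picks $r$ with $F\subseteq B^{\mathbb{R}}_{2^r}(0)$ and shifts, in the block indexed by $g\in\xi^a_{a+d}$, only the annulus part: $E:=\bigcup_{g\in\xi^a_{a+d}}g+\bigl((F\cap A_n)\cup((F+v_g)\cap(A_a\setminus A_n))\bigr)$, where the $v_g$ are distinct integer multiples of $5\cdot 2^r$ with $v_0=0$. These shifts lie in $V_n$ $2$-adically, so (a) and (c) hold, and since cosets of the subgroup $A_n$ never meet both the fixed centre and the shifted annulus of a block, all patches at scales $A\subseteq A_n$ are exactly preserved, giving (b). At scale $A_{n+1}$, however, the patch based at a point of $\xi^a_{a+d}$ sees both the unshifted centre and the annulus shifted by $v_g$; viewed inside $\mathbb{R}$ these patches lie in $B^{\mathbb{R}}_{2\cdot 2^r}(0)\cup B^{\mathbb{R}}_{2\cdot 2^r}(v_g)$ and contain points of $B^{\mathbb{R}}_{2^r}(v_g)$, so they are pairwise distinct as subsets of $\mathbb{Z}[1/2]$, yielding the lower bound $2^d$ for every $d$ (your argument only sketches the case $d\leq n+2$ and leaves larger $d$ unresolved). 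Your upper-bound count is close in spirit, but note that in the ultrametric setting no patch ``straddles'' a block boundary; the correct dichotomy is whether the base point lies in $\xi^a_{a+d}+A_m$ (at most $2^{d+m}$ points of $E$) or not, in which case the patch is already a patch of $F$.
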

\begin{proof}
	%Definition of $E$
	Note that $F\subseteq \mathbb{Q}$ is a finite subset. 
	Interpreting $F\subseteq \mathbb{R}$ there exists a natural number $r\geq n$ such that $F$ is contained in the centred $\mathbb{R}$-ball of radius $2^r$. 
	We denote 
	\[W:=\{5\cdot 2^r \cdot j;\, j\in \{0,\ldots, 2^{d}-1\}\}.\]
	and observe	$|W|=2^{d}=|\xi_{a+d}^{a}|$. 	
	We choose an injective mapping
	$\xi_{a+d}^{a}\to W;\, g\mapsto v_g$ 
	that satisfies $v_0=0$. For $g\in \xi_{a+d}^a$ we denote
	\[F_g:=
	(F\cap A_n)
	\cup ((F+v_g)\cap (A_{a}\setminus A_n))
\]	
and define $E:=\bigcup_{g\in \xi_{a+d}^a} g+F_g.$
%Note that $F_g\subseteq A_a$ for all $g\in \xi_{a+d}^a$. 
	From $v_0=0$ we observe that there holds $F=0+F_0\subseteq E$. 
%
% (a) 
%
For $g\in \xi_{a+d}^a$ we observe that $F\cap A_n$ is well placed in $A_n$ and that $(F+v_g)\cap (A_{a}\setminus A_n)=v_g+(F\cap (A_{a}\setminus A_n))$ is well placed in $A_{a}\setminus A_n$.
Thus 
$F_g$
	is well placed in $A_{a}$. 
	Since $\xi_{a+d}^{a}$ contains exactly one element from each ball of radius $2^a$ that is contained in $A_{a+d}$ we observe that $E$ is well placed in $A_{a+d}$.

%
% (b,c)
%
	Since $A_n$ is a subgroup of $\mathbb{Q}_2$ that contains $W$ it is straightforward to observe (b). 
	From $v_g\in W\subseteq V_n$ for $g\in  \xi_{a+d}^a$ and the definition of $E$ we establish (c).

%
%(d)
%	
In order to show the first inequality of (d) it is sufficient to consider $m=n+1$. Note that $\xi_{a+b}^{a}\subseteq E$.
We will show that the $A_{n+1}$-patches $(E-g)\cap A_{n+1}$ are distinct for distinct $g\in \xi_{a+d}^a$. The statement then follows from $|\xi_{a+d}^a|=2^{d}$. 
	We use that $F$ is well placed in $A_{a}$ and choose $f\in F\cap (A_{n+1}\setminus A_n)$. 
	Let $g\in \xi_{a+d}^{a}$. 
	Then $f+v_g\in (F+v_g)\cap (A_{n+1}\setminus A_n)\subseteq (E-g)\cap A_{n+1}$. 
	Now recall that $f+v_g\in \mathbb{Q}$. Interpreting $f+v_g$ as elements of $\mathbb{R}$ we observe from our choice of $r$ that $f+v_g$ is contained in the $\mathbb{R}$-ball $B_{2^r}^{\mathbb{R}}(v_g)$ of radius $2^r$ around $v_g$. This shows that $(E-g)\cap A_{n+1}$ contains elements of $B_{2^r}^{\mathbb{R}}(v_g)$ for all $g\in \xi_{a+d}^a$. 
	
	For $g\in \xi_{a+d}^a$ recall that $F_g\subseteq A_a$ and hence $F_g=((g+F_g)-g)\cap A_a\subseteq (E-g)\cap A_a$. Now note that two balls of radius $1$ contained in $A_{a+d}$ are either disjoint or equal. Since $A_{a+d}\ni x\mapsto x-g\in A_{a+d}$ is an isometric homeomorphism it thus establishes a bijection on the balls of radius $1$ contained in $A_{a+d}$. 
	We have already seen that $E$ is well placed in $A_{a+d}$ and observe that also $E-g$ is well placed in $A_{a+d}$. In particular, $(E-g)\cap A_a$ is well placed in $A_a$. Since $F_g$ is also well placed in $A_a$ we have 
	$F_g=(E-g)\cap A_a$. 
	Thus 
	\begin{align*}
	(E-g)\cap A_{n+1}
	\subseteq (E-g)\cap A_a
	=F_g\subseteq F\cup (F+v_g)
	\subseteq  B_{2^r}^{\mathbb{R}}(0)\cup B_{2^r}^{\mathbb{R}}(v_g). 
	\end{align*}
%	Furthermore, we have $\xi_{a+d}^{a}\subseteq B_{1}^{\mathbb{R}}(0)$ and 
%	$F\subseteq B_{2^r}^{\mathbb{R}}(0)$. Thus
%	\begin{align*}
%	(E-g)\cap A_{n+1}
%	&\subseteq E-g \subseteq \xi^a_{a+d}-g+(F\cup(F+v_g))\\
%	&\subseteq (F+\xi^a_{a+d}-\xi^a_{a+d})\cup (F+v_g+\xi^a_{a+d}-\xi^a_{a+d})
%	\subseteq B_{2\cdot 2^r}^{\mathbb{R}}(0)\cup B_{2\cdot 2^r}^{\mathbb{R}}(v_g).
%	\end{align*} 
	Since $B_{2^r}^{\mathbb{R}}(v_g)$ are distinct for distinct $g\in \xi_{a+d}^a$ by our choice of $W$ we deduce that $(E-g)\cap A_{n+1}$ are indeed distinct for distinct $g\in \xi_{a+d}^a$.

	In order to observe the second inequalities of (d) note that for $m\in \{n+1,n+2\}$ the group structure of $A_m$ yields that the $A_m$ patches $(E-g)\cap A_m$ with 
	$g\in E\cap (\xi_{a+d}^a+(A_{a}\setminus A_m))$ are all contained in $\operatorname{Pat}_F(A_m)$. Thus 
	$|\operatorname{Pat}_E(A_m)|\leq |\operatorname{Pat}_F(A_m)|+|E\cap (\xi_{a+d}^a+A_{m})|=|\operatorname{Pat}_F(A_m)|+2^{d+m}.
	$
\end{proof}

\begin{theorem}\label{the:counterexamples}
	For each $r,s\in [0,\infty]$ with $s\leq r$ there exists a FLC Delone set $\omega\subseteq \mathbb{Z}[\frac{1}{2}]$ in $\mathbb{Q}_2$ such that 
	\begin{itemize}
		\item[(i)] $\omega$ has UPF and is pure point diffractive. 
		\item[(ii)] $h_{pat}(\omega)=\limsup_{n\to \infty}\frac{\log|\operatorname{Pat}_\omega(B_n)|}{\theta(B_n)}=r$.
		\item[(iii)] $\liminf_{n\to \infty}\frac{\log|\operatorname{Pat}_\omega(B_n)|}{\theta(B_n)}=s$. 
	\end{itemize}
\end{theorem}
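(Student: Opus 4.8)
The strategy is to build $\omega$ as an increasing union of "well-placed" finite approximations, using Lemma \ref{lem:constructing_fancy_sets} as the engine and Lemma \ref{lem:ppdsets} together with Lemma \ref{lem:upfalongsortetsequences} to control the diffraction and frequency properties. Start from $F_0 = \{0\}$, which is trivially well placed in some $A_{a_0}$. Then iterate: given $F_j$ well placed in $A_{a_j}$ and containing $0$, choose parameters $n_j, d_j$ and apply Lemma \ref{lem:constructing_fancy_sets} to produce $F_{j+1} := E$ which is well placed in $A_{a_j+d_j} =: A_{a_{j+1}}$, contains $F_j$, and satisfies the patch-count bounds (d) at scales $m \in \{n_j+1, n_j+2\}$ together with the stability statements (b) and (c). Set $\omega := \bigcup_j F_j$. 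Property (b) (and a limiting version of it, since the union stabilizes locally) guarantees $\operatorname{Pat}_\omega(A) = \operatorname{Pat}_{F_j}(A)$ for all sufficiently large $j$ once $A \subseteq A_{n_j}$, so $\omega$ is FLC, and by part (c) each $\omega$ is $V_n$-close to a translation-structured set $F_j + \xi^{a_j}_{a_{j+1}} + \cdots$, i.e.\ $\omega$ agrees up to small scale with sets of the form $\xi^k + F$ from Lemma \ref{lem:ppdsets}; this is what will be leveraged to get pure point diffraction and uniform patch frequencies.

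The quantitative core is the bookkeeping for (ii) and (iii). Writing $\theta(B_{2^m}) = \theta(A_m) = 2^m$ (with the natural normalization of Haar measure on $\mathbb{Q}_2$), the bound (d) reads
\[
2^{d_j} \le |\operatorname{Pat}_\omega(A_{m})| \le |\operatorname{Pat}_{F_j}(A_m)| + 2^{d_j + m}
\qquad (m \in \{n_j+1, n_j+2\}),
\]
so that $\frac{\log|\operatorname{Pat}_\omega(A_m)|}{2^m}$ is squeezed between roughly $\frac{d_j \log 2}{2^{n_j+2}}$ and $\frac{(d_j + m)\log 2 + \log|\operatorname{Pat}_{F_j}(A_m)|}{2^{n_j+1}}$. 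By choosing the $n_j$ to grow fast and the $d_j$ tuned relative to $2^{n_j}$, one can force the sequence $m \mapsto \frac{\log|\operatorname{Pat}_\omega(A_m)|}{2^m}$ to oscillate: make it dip near $s$ along a subsequence of scales (choosing $d_j$ small compared to $2^{n_j}$, so the lower bound is negligible and the set looks "locally sparse") and peak near $r$ along another subsequence (choosing a later block with $d_j \approx r \cdot 2^{n_j}/\log 2$, so the lower bound $2^{d_j}$ already forces $\frac{\log|\operatorname{Pat}|}{2^m} \gtrsim r$). The constraint $n_j + 2 \le a_j$ from Lemma \ref{lem:constructing_fancy_sets}, and the fact that (d) only controls two consecutive scales per step, means one must interleave two families of steps — "entropy-raising" steps and "entropy-lowering" steps — and check that the contribution of each step to the patch count at the \emph{other} family's scales is controlled by the additive $|\operatorname{Pat}_{F_j}(A_m)|$ term, which is itself bounded by earlier choices. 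The cases $r$ or $s$ equal to $0$ or $\infty$ are handled by the same scheme with $d_j$ either bounded or growing super-fast relative to $2^{n_j}$.

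The main obstacle will be (iii) combined with (i): one must show the $\liminf$ can be pushed down to $s$ \emph{while preserving pure point diffraction and uniform patch frequencies}. The diffraction and frequency properties come from the fact that at every finite scale $\omega$ eventually coincides with a regular model set of the form in Lemma \ref{lem:ppdsets}, so one needs the approximation (c) to be genuinely local and persistent — i.e.\ that the symmetric difference $\omega \,\Delta_{V_n}\, (\xi^{a_j} + F)$ is eventually empty for each fixed $n$. This should follow by iterating (c) across the construction, but it requires that the "perturbations" $v_g$ introduced at step $j$ (which live in $V_{n_{j-1}}$ by the choice $W \subseteq V_n$) do not reappear at smaller scales in later steps; ensuring the $n_j$ are strictly increasing and that each step only modifies the annulus $A_{a_j} \setminus A_{n_j}$ takes care of this. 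Then Proposition \ref{pro:geometric_characterization_ppd} (or directly Lemma \ref{lem:ppdsets} applied to the local truncations) yields (i). The remaining check is that uniform patch frequencies survive the limit: here Lemma \ref{lem:upfalongsortetsequences} reduces the problem to $N$-sorted van Hove sequences, along which the exact counting identity (b) — which equates local patch counts in $F_j$ and in the much larger $E$ up to translations by $\xi_a$ and $\xi^a_{a+d}$ — gives convergence of the frequency ratios; passing $j \to \infty$ is then routine since the patch statistics stabilize.
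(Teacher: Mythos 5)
Your overall architecture is the same as the paper's: iterate Lemma \ref{lem:constructing_fancy_sets} to produce an increasing sequence of well placed sets, take $\omega$ as the union, use (b) for FLC and for the exact counting identity that feeds Lemma \ref{lem:upfalongsortetsequences} (uniform patch frequencies), and iterate (c) to see that $\omega$ agrees at scale $V_l$ with a set of the form $\xi^{l}+\omega_l$, which is a regular model set by Lemma \ref{lem:ppdsets}, so that Proposition \ref{pro:geometric_characterization_ppd} transfers pure point diffraction. That part of your plan is sound and matches the paper.

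The gap is in the quantitative bookkeeping for (ii) and (iii). You propose sparse steps at fast-growing scales $n_j$, interleaving ``entropy-raising'' and ``entropy-lowering'' blocks, so that the two-sided control (d) is only available at the scales $n_j+1,n_j+2$. But $m\mapsto|\operatorname{Pat}_\omega(A_m)|$ is nondecreasing while $\theta(A_m)=2^m$ doubles, and at the scales between consecutive steps the count is frozen by the finite data of the last step: by (b), $\operatorname{Pat}_\omega(A_m)=\operatorname{Pat}_{F_{j+1}}(A_m)$ for $m\le n_{j+1}$, so $|\operatorname{Pat}_\omega(A_m)|\le|F_{j+1}|=2^{a_{j+1}}$ independently of $m$. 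After a ``peak'' step the lemma permits (and your scheme requires) $n_{j+1}$ as large as about $a_{j+1}-2$, and then $\log|\operatorname{Pat}_\omega(A_m)|/\theta(A_m)\le a_{j+1}\log 2/2^{m}$ collapses toward $0$ as $m$ runs up the gap; hence the limit inferior is dragged below $s$ (typically to $0$) whenever $s>0$, and the cases $0<s=r$ and $s=r=\infty$ cannot be reached at all by a ``dip along one subsequence, peak along another'' scheme. The paper avoids this by performing a construction step at \emph{every} scale: it fixes one sequence $(d_n)$ with $d_n\ge d_{n-1}+d_{n-2}$, $\limsup_n d_n/2^n=2r/\log 2$ and $\liminf_n d_n/2^n=2s/\log 2$ (the Fibonacci-type constraint guarantees $a_{n-1}\le d_n$, which is exactly what lets the telescoped upper bound from (d), using the scale $m=n+2$ of the previous step, collapse to $2^{d_n+n+3}$, and it is compatible with any prescribed limsup/liminf because the minimal growth it forces is $o(2^n)$). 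Then (d) pinches $\log|\operatorname{Pat}_\omega(A_{n+1})|/\theta(A_{n+1})$ between $(\log 2/2)\,d_n/2^n$ and $(\log 2/2)\,(d_n+n+3)/2^n$ at every scale, so its accumulation points are exactly those of $(\log 2/2)\,d_n/2^n$, giving (ii) and (iii) simultaneously. To repair your proposal you must do the same: steps at consecutive scales, with the oscillation encoded in the choice of $(d_n)$ and two-sided control at every $n$, rather than interleaved sparse blocks.
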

\begin{remark}
\begin{itemize}
\item[(i)]
	It is natural to ask, whether the constructed $\omega$ are actually regular model sets with respect to a cut and project scheme. This is not the case, as regular model sets have $0$ patch counting entropy as discussed in \cite{HuckandRichard}. 
\item[(ii)]  Note that by Theorem \ref{the:topentropy0} it follows that the topological entropy of the Delone action $\pi_\omega$ is $0$. Thus $\omega$ is an example of a FLC Delone set for which the topological entropy and the patch counting entropy do not coincide. They coincide in the Euclidean setting \cite{baake2007pure}. 
\item[(iii)] Choosing $r\neq s$ we observe that the limit in the patch counting formula does not exist, even if we consider the average along the centred closed balls. This limit exists in the Euclidean setting \cite{hauser2020Anote}. 
\item[(iv)] Choosing $s>0$ we observe that we cannot change to the limit inferior in the definition of the patch counting entropy in order to get that pure point diffractive Delone sets have 0 patch counting entropy. 
\item[(v)] Choosing $r=\infty$ allows to observe that the patch counting entropy of a FLC Delone set can even be infinite. This is not possible for FLC Delone subsets of the Euclidean space as discussed in \cite{lagarias1999geometric}. 
\end{itemize}

\end{remark}
\begin{proof}
	Note that $\{B_n;\, n\in \mathbb{N}\}=\{A_n;\, n\in \mathbb{N}\}$ and hence we have 
	\[\limsup_{n\to \infty}\frac{\log|\operatorname{Pat}_\omega(B_n)|}{\theta(B_n)}=\limsup_{n\to \infty}\frac{\log|\operatorname{Pat}_\omega(A_n)|}{\theta(A_n)}\] and a similar statement about the limit inferior. We will thus consider only the van Hove sequence $(A_n)_{n\in \mathbb{N}}$ in the following. 

	Any sequence $(f_n)_{n\in \mathbb{N}}$ in $\mathbb{N}$ with $f_n=f_{n-1}+f_{n-2}$ for $n\geq 3$ satisfies $f_n/2^n\to 0$. 
	%%%% See comment at the beginn of the section above for a proof. 	
	This allows to choose a sequence $(d_n)_{n\in \mathbb{N}}$ in $\mathbb{N}$ such that $d_n\geq d_{n-1}+d_{n-2}$ and which furthermore satisfies 
	$\limsup_{n\to \infty} d_n/2^n=2r/\log(2)$ and $\liminf_{n\to \infty} d_n/2^n=2s/\log(2)$.  
	W.l.o.g.\ we furthermore assume $d_2\geq 3$ and consider $a_n:=3+\sum_{i=1}^{n-1} d_i$ for $n\geq 1$. Note that $a_1=3$ and that $a_{n+1}=a_n+d_n$ for $n\geq 1$. 
	Clearly $d_2\geq a_1$ and $a_1\geq 1+2$. 
	For $n\geq 2$ we observe inductively $d_{n+1}\geq d_{n}+d_{n-1}
	\geq a_{n-1}+d_{n-1}
	=a_{n}$ and $a_n= a_{n-1}+d_{n-1} \geq n-1 +2 +1 = n+2$. 
	Thus for all $n\in \mathbb{N}$ we have $d_{n+1}\geq a_n\geq n+2$. 
	We set $\omega_1:=\xi_{a_1}^0$ and observe that $\omega_1$ is well placed in $A_{a_1}$ and contains $0$. 
	We apply 
	Lemma \ref{lem:constructing_fancy_sets} inductively in order to obtain a family of subsets of $\mathbb{Z}[1/2]$ such that 
	\begin{itemize}
	\item[(a)] $\omega_n\subseteq \omega_{n+1}$ and $\omega_{n+1}$ is well placed in $A_{a_{n+1}}$. 
	\item[(b)] For a compact subset $A\subseteq A_n$, an $A$-patch $P$ of $\omega_{n+1}$, $k\leq a_n$, $h\in \xi_{a_n}^0$ and $h'\in \xi^{a_n}_{a_{n+1}}$ we have 
	\[
	|\{g\in (A_k+h);\, (\omega_n-g)\cap A=P\}|
	=|\{g\in (A_k+h+h');\, (\omega_{n+1}-g)\cap A=P \}|,\]
	in particular we have $\operatorname{Pat}_{\omega_n}(A)=\operatorname{Pat}_{\omega_{n+1}}(A)$. 
	\item[(c)] $\omega_{n+1}\Delta_{V_n} (\omega_n+\xi^{a_n}_{a_{n+1}})=\emptyset$.
	\item[(d)] $2^{d_n}\leq |\operatorname{Pat}_{\omega_{n+1}}(A_{m})|\leq |\operatorname{Pat}_{\omega_{n}}(A_{m})| +2^{d_n+m}$ for $m\in \{n+1,n+2\}$.
\end{itemize}		
		From (a) and (b) we observe that $\omega:=\bigcup_{n\in \mathbb{N}}\omega_n$ is a FLC Delone set.
	Using (b) and (d) with $m=n+1$ we compute 
	\[
	\frac{\log|\operatorname{Pat}_\omega(A_{n+1})|}{\theta(A_{n+1})}
	=\frac{\log|\operatorname{Pat}_{\omega_{n+1}}(A_{n+1})|}{\theta(A_{n+1})}
	\geq \frac{\log (2^{d_n})}{2^{n+1}}
	=\frac{\log(2)}{2}\frac{d_n}{2^n}.\]
	Furthermore, (b) and (d) imply
	\begin{align*}
		|\operatorname{Pat}_\omega(A_{n+1})|
		&=|\operatorname{Pat}_{\omega_{n+1}}(A_{n+1})|\\
		&\leq |\operatorname{Pat}_{\omega_{n}}(A_{n+1})|+2^{d_{n}+n+1}\\
		&\leq |\operatorname{Pat}_{\omega_{n-1}}(A_{n+1})|+2^{d_{n-1}+n+1}+2^{d_{n}+n+1}\\
		&\leq 2^{a_{n-1}}+2^{d_n+n+2}.\\
		&\leq 2^{d_n+n+3}. 
	\end{align*}
	Thus
	\[\frac{\log|\operatorname{Pat}_\omega(A_{n+1})|}{\theta(A_{n+1})}
	\leq \frac{\log (2^{d_n+n+3})}{2^{n+1}}
	=\frac{\log(2)}{2}\frac{(d_n+n+3)}{2^n}\]
	and we observe that ${\log|\operatorname{Pat}_\omega(A_{n+1})|}/{\theta(A_{n+1})}$ and $(\log(2)d_n)/(2 \cdot 2^n)$ have the same accumulation points and conclude (ii) and (iii).

	We next show that $\omega$ has uniform patch frequencies and consider a compact subset $A\subseteq \mathbb{Q}_2$ and an $A$-patch $P$ of $\omega$. There is $N\in \mathbb{N}$ such that $A\subseteq A_{N}$. Let now $(C_m)_{m\in \mathbb{N}}$ be a $A_{a_N}$-sorted van Hove sequence. By Lemma \ref{lem:upfalongsortetsequences} it will be sufficient to show that 
	$P$ has a frequency along $(C_m)_{m\in \mathbb{N}}$. 
	For this we consider $m\in \mathbb{N}$ with $m\geq a_N$. There exist $M\in \mathbb{N}$ and a finite set $F\subseteq \xi_{a_M}^{a_N}$ such that $C_m=F\oplus A_{a_N}\subseteq A_{a_M}$.
	Applying (b) inductively we observe that for $h\in F\subseteq \xi_{a_M}^{a_N}=\xi_{a_M}^{a_{M-1}}\oplus \dots \oplus \xi_{a_{N+1}}^{a_N}$ we have 
	\[|\{g\in A_{a_N};\, (\omega_N-g)\cap A=P\}|
	=|\{g\in (A_{a_N}+h);\, (\omega_M-g)\cap A=P\}|.\]
	Thus
	\begin{align*}
	|\{g\in C_m;\, (\omega-g)\cap A=P\}|
	&=|\{g\in C_m;\, (\omega_M-g)\cap A=P\}|\\
	&=\sum_{h\in F}|\{g\in (A_{a_N}+h);\, (\omega_M-g)\cap A=P\}|\\
	&=|F| |\{g\in A_{a_N};\, (\omega_N-g)\cap A=P\}|\\
	&=\theta(C_m)/\theta(A_{a_N}) |\{g\in A_{a_N};\, (\omega_N-g)\cap A=P\}|. 
	\end{align*}
	Thus $P$ has the frequency $|\{g\in A_{a_N};\, (\omega_N-g)\cap A=P\}|/\theta(A_{a_N})$ along $(C_m)_{m\in \mathbb{N}}$. 
	
	It remains to show that $\omega$ has pure point diffraction. 
	Note that $\rho^l:=\xi^{a_l}+\omega_l$ has uniform patch frequencies and pure point diffraction as a subset of $\mathbb{Q}_2$ by Lemma \ref{lem:ppdsets}. 
	From Proposition \ref{pro:geometric_characterization_ppd} we thus deduce that for $\epsilon>0$ the set of all $g\in \mathbb{Q}_2$ for which 
	\[\limsup_{n\to \infty} \frac{|(\rho^l\Delta_{V_l} (\rho^l-g))\cap A_n|}{\theta(A_n)}<\epsilon\]
	is relatively dense in $\mathbb{Q}_2$. 	
	Recall that $V_l$ is a subgroup of $\mathbb{Q}_2$ and hence 
	\[\{(\tau,\tau')\in \mathcal{P}(\mathbb{Q}_2)^2;\, \tau\Delta_{V_l}\tau'=\emptyset\}\]
	is an equivalence relation. This observation allows to deduce $\omega \Delta_{V_l} \rho^l=\emptyset$ from (c). Furthermore, $V_l$ being a subgroup yields that for all $g\in \mathbb{Q}_2$ we have 
	$|(\omega\Delta_{V_l} (\omega-g))\cap A_n|=|(\rho^l\Delta_{V_l} (\rho^l-g))\cap A_n|$.
	Thus also the set of all $g\in \mathbb{Q}_2$ with 
	\[\limsup_{n\to \infty} \frac{|(\omega\Delta_{V_l} (\omega-g))\cap A_n|}{\theta(A_n)}<\epsilon\]
	is relatively dense in $\mathbb{Q}_2$. Proposition \ref{pro:geometric_characterization_ppd} implies that $\omega$ has pure point diffraction.
\end{proof}

\begin{acknowledgement}
The author would like to thank Michael Baake for useful suggestions that helped improving this article. Furthermore, the author wants to express his gratitude towards the anonymous referee for his careful reading and various suggestions. In particular, the referee gave an idea that helped simplifying the proof of Lemma \ref{lem:upfalongsortetsequences}. 
\end{acknowledgement}

\footnotesize
\bibliographystyle{alpha}
\bibliography{referencesPPDaEbRd}

 \vspace{10mm} \noindent
\begin{tabular}{l l }
Till Hauser, Max-Planck-Institut für Mathematik, Vivatsgasse 7, Bonn, Germany\\
Email address: hauser.math@mail.de
\end{tabular}

\end{document}